\documentclass[a4paper,12pt]{article}
\usepackage{hyperref}
\hypersetup{
    colorlinks=true,
    linktoc=all,
    linkcolor=red,     
    citecolor=blue,     
}
\usepackage{graphicx}
\usepackage{amsmath,amssymb,amsfonts,amsthm,graphics,enumitem}
\usepackage{latexsym, bm}
\usepackage{multicol}
\usepackage{indentfirst}

\usepackage{setspace}
\newtheorem{theorem}{Theorem}[section]
\newtheorem{lemma}{Lemma}[section]

\newtheorem{corollary}{Corollary}[section]

\newtheorem{definition}{Definition}[section]
\newtheorem{problem}{Problem}[section]

\newtheorem{remark}{Remark}[section]

\newcommand{\ignore}[1]{}
\newtheorem{claimnum}{Claim}
\newtheorem{claimrom}{Claim}

\begin{document}
\noindent
\begin{spacing}{1}

\title {Rainbow panconnectivity in a graph collection}
\date{}

\author{
Menghan Ma\footnote{Center for Discrete Mathematics, Fuzhou University,
Fuzhou, 350108, P.~R.~China. Email: {\tt menghanma664@163.com}. } \;\;   \;\; Lihua You\footnote{School of Mathematical Sciences, South China Normal University, Guangzhou, 510631, P. R. China.
E-mail: {\tt ylhua@scnu.edu.cn}.} \;\;   \;\; Xiaoxue Zhang\footnote{Corresponding author. School of Mathematical Sciences, South China Normal University, Guangzhou, 510631, P. R. China.
E-mail: {\tt zhang\_xx1209@163.com}.}
}
\maketitle
\begin{abstract}
Let $\mathbf{G}=\{G_1,\dots,G_{n-1}\}$ be a collection of not necessarily distinct $n$-vertex graphs with the same vertex set $V$. A path $P$ with $V(P)\subseteq V$ and $|E(P)|\leq n-1$ is called \emph{rainbow} in $\mathbf{G}$, if there exists an injection $\phi\colon E(P)\to [n-1]$ such that $e\in E(G_{\phi(e)})$ for each $e\in E(P)$.
The graph collection $\mathbf{G}$ is said to be \emph{rainbow panconnected} if for every pair of vertices $x,y\in V$, there exists a rainbow path of $k$ vertices joining $x$ and $y$ in $\mathbf{G}$ for every integer $k\in \left[d_{\mathbf{G}}(x,y)+1, n\right]$, where $d_{\mathbf{G}}(x,y)$ is the length of a shortest rainbow path between $x$ and $y$ in $\mathbf{G}$.
In this paper, we study the rainbow panconnectivity of $\mathbf{G}$ under the minimum degree condition. Our result improves upon the corresponding results of [J. Graph Theory, \textbf{104}(2)(2023), 341--359] and [Electron. J. Combin., \textbf{32}(4)(2025), \#P4.17].

\noindent
{\bf Keywords:} \ Rainbow panconnectivity; Transversal; Graph collection; Minimum degree

\noindent
{\bf MSC:} 05C38
\end{abstract}
\section{Introduction}
In recent years, transversals in graph collections have become a popular research topic. This concept was first formally introduced by Joos and Kim \cite{Joos} in $2020$. Let $\mathbf{G}=\{G_1,\dots,G_s\}$ be a collection of not necessarily distinct graphs with the same vertex set $V$.
For a graph $H$ with $V(H)\subseteq V$ and $|E(H)|\leq s$, if there exists an injection $\phi\colon E(H)\to [s]$ such that $e\in E(G_{\phi(e)})$ for each $e\in E(H)$, then we say that $H$ is a \emph{partial $\mathbf{G}$-transversal} if $|E(H)|<s$, $H$ is a \emph{$\mathbf{G}$-transversal} if $|E(H)|=s$. That is to say, each edge of $H$ comes from a distinct graph $G_i$ with $i\in[s]$, we further say that $H$ is \emph{rainbow} in $\mathbf{G}$.
The following question was proposed by Joos and Kim in \cite{Joos}.

\begin{problem}[\upshape\cite{Joos}]\label{prob1-1}
Let $H$ be a graph with $m$ edges and $\mathbf{G}=\{G_1,\dots,G_s\}$ be a collection of not necessarily distinct graphs with the same vertex set $V$, where $m\leq s$. Which properties imposed on $\mathbf{G}$ yield a rainbow copy of $H$?
\end{problem}

Focusing on Problem \ref{prob1-1}, there has been a great deal of work done on it. Scholars have imposed a variety of constraints on graph collections, including Dirac-type conditions \cite{Bradshaw,Cheng,Hu,Joos,Li2023,Li2024,Ma,Sun}, Ore-type conditions \cite{Li2023,LiandWang,LiandLuo,LiuandChen}, edge number conditions \cite{Aharoni,Babinski,Liu,Zhang} and so on. For more results, we refer the readers to a survey \cite{SunandWang}.

A path (cycle) on $l$ vertices is referred to as an $l$-path (a $l$-cycle), respectively. A path (cycle) which contains all vertices of a graph is called a \emph{Hamiltonian path (cycle)}.
A graph $G$ is said to be \emph{panconnected}, if for every pair of vertices $x,y\in V(G)$, there exists a $k$-path joining $x$ and $y$ in $G$ for every integer $k\in \left[d_{G}(x,y)+1, n\right]$. The minimum degree condition that guarantees a graph is panconnected was given by Williamson \cite{Williamson}.

\begin{theorem}[\upshape\cite{Williamson}]\label{thm1-1}
If $G$ is a graph of order $n\geq4$ such that $\delta(G)\geq \frac{n+2}{2}$, then $G$ is panconnected.
\end{theorem}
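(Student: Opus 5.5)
We would prove Theorem~\ref{thm1-1} directly, by induction on the path length, using only degree counting. Fix distinct $x,y\in V(G)$. Since $\delta(G)\geq \frac{n+2}{2}$, any two vertices $u,v$ satisfy $|N(u)\cap N(v)|\geq 2\delta(G)-n\geq 2$, even after discounting $u$ and $v$ themselves when they are adjacent. Hence $d_G(x,y)\le 2$.

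The base cases are short. If $xy\in E(G)$, the $2$-path is the edge $xy$. For a $3$-path we choose $z\in N(x)\cap N(y)$, which is nonempty. So every $k$ with $d_G(x,y)+1\le k\le 3$ is covered. The remaining range is $4\le k\le n$.

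For the inductive step, suppose $P=x=v_1v_2\cdots v_k=y$ is an $x$--$y$ path on $k<n$ vertices. We produce one on $k+1$ vertices by either inserting a vertex or rerouting.

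\emph{Insertion.} If some $w\notin V(P)$ is adjacent to both ends of an edge $v_iv_{i+1}$ of $P$, replace that edge by $v_iwv_{i+1}$.

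\emph{Counting.} Otherwise, for every $w\notin V(P)$, no two consecutive vertices of $P$ both lie in $N(w)$. Then $|N(w)\cap V(P)|\le \lceil k/2\rceil$, and so
\[
d(w)\le \lceil k/2\rceil + (n-k-1).
\]
When $k\ge 4$ this is below $\frac{n+2}{2}$ unless $k$ is small relative to $n$. The bound is only useful when $n-k$ is small, so we split into two cases.

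\emph{Case (a): $k$ is large.} The counting bound gives a contradiction directly. Concretely, $d(w)\le n-1-\lfloor k/2\rfloor$, and this is less than $\frac{n+2}{2}$ once $\lfloor k/2\rfloor> \frac{n-4}{2}$, that is, roughly $k\ge n-3$.

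\emph{Case (b): $k$ is small.} Here there are many vertices outside $P$, and we use two of them. Let $R=V(G)\setminus V(P)$. Each $w\in R$ has at least $\frac{n+2}{2}-(n-k-1)$ neighbours on $P$. We use a rotation (P\'osa-type) argument. Pick $w\in R$ adjacent to some $v_i$ with $1<i<k$, and some $w'\in R$ adjacent to both $w$ and $v_{i+1}$. Such a $w'$ exists because $N(w)\cap N(v_{i+1})$ has size at least $2$ and can be chosen outside $P$ when $P$ is short. Then replace the edge $v_iv_{i+1}$ by $v_iww'v_{i+1}$. This gains two vertices, not one.

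To handle parity, we use the freedom in the choice of the $3$-path, or an edge $v_{i}v_{i+2}$ chord, to shortcut one vertex.

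A cleaner route is to follow Williamson: show the stronger statement that $G$ is Hamiltonian-connected, and that any $x$--$y$ path on $k$ vertices extends to $k+1$ by a closure-type argument. The key lemma would be as follows. If $P$ is a maximal non-extendable $x$--$y$ path on $k<n$ vertices, then for $w\notin V(P)$ the sets
\[
N(w)\cap V(P)\quad\text{and}\quad \{v_{i+1}: v_i\in N(w)\}
\]
are disjoint. Together with the $\ge 2$ common-neighbour property, this forces $d(w)\le n-\delta$, a contradiction.

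The main obstacle is exactly this extension step when $P$ is short and $R$ is large. There the simple counting fails, and one must exploit edges inside $R$ (via $w,w'$) plus a parity adjustment. I would first try to prove monotone extension $k\to k+1$ using a common neighbour in $R$ of $w$ and a path vertex. If that breaks, I would instead build paths for all lengths from a Hamiltonian $x$--$y$ path, which exists by Ore-type reasoning since $\delta\ge \frac{n+1}{2}$. That approach shortens the path by one vertex at a time using chords $v_iv_{i+2}$, whose existence follows from $|N(v_i)\cap N(v_{i+2})|\ge 2$ and a counting argument along the path.
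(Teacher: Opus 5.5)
The paper does not prove this statement; it is quoted from Williamson. Judged on its own, your proposal has a genuine gap. The only step you actually establish is the counting contradiction of Case (a), and it works only for $k\ge n-2$ (or $k\ge n-3$ when $n$ is odd). For every smaller $k$ you offer three devices, and each one fails on concrete graphs.

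\emph{Insertion and the $w,w'$ rotation.} Take $n=12$, let $V(P)=\{v_1,\dots,v_8\}$ span a clique, let $R=\{w_1,\dots,w_4\}$ span a clique, and join every $w_j$ to $v_1,v_3,v_5,v_7$. Put $x=v_1$, $y=v_8$ and $P=v_1v_2\cdots v_8$. Then $\delta(G)=7=\frac{n+2}{2}$, yet no $w\in R$ is adjacent to two consecutive vertices of $P$, so insertion fails. The Case (b) rotation also fails: whenever $wv_i\in E(G)$, the vertex $v_{i+1}$ has even index and no neighbour in $R$. So all common neighbours of $w$ and $v_{i+1}$ lie on $P$, and the phrase ``$P$ short'' does not cover $k=8$.

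\emph{The key lemma.} The same graph refutes it if ``maximal non-extendable'' means ``admits no insertion.'' The two sets are disjoint, yet $d(w)=7>n-\delta(G)=5$. Disjointness only bounds $|N(w)\cap V(P)|$ by about $k/2$ and says nothing about the up to $n-k-1$ neighbours of $w$ off $P$. If ``maximal'' instead means ``longest,'' the lemma would at best give Hamiltonian-connectedness and nothing for intermediate $k$.

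\emph{The fallback.} $|N(v_i)\cap N(v_{i+2})|\ge 2$ does not make $v_iv_{i+2}$ an edge. For $n\ge 8$, $K_n$ minus all pairs $v_iv_{i+2}$ has $\delta=n-3\ge\frac{n+2}{2}$, but its Hamiltonian path $v_1\cdots v_n$ has no such chord. Finally, the parity repair after gaining two vertices is never specified.

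The missing idea is to stop growing paths one vertex at a time and instead slide a window along a fixed cycle. This is exactly the uncoloured version of the argument in Lemma~\ref{lem2}. For $n=4$ the hypothesis forces $G=K_4$. For $n\ge 5$, every vertex of $G'=G-\{x,y\}$ has degree at least $\frac{n+2}{2}-2=\frac{|V(G')|}{2}$. By Dirac's theorem, $G'$ has a Hamiltonian cycle $C=c_1c_2\cdots c_{n-2}c_1$.

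Fix $k\in[3,n]$ and set $I_k=\{i: xc_{i+k-3}\in E(G)\}$ and $J=\{i: yc_i\in E(G)\}$, with indices modulo $n-2$. Each of $x,y$ has at most one neighbour off $C$, so $|I_k|,|J|\ge\frac{n}{2}$. Hence $|I_k|+|J|\ge n>n-2$, and some $i\in I_k\cap J$ exists. Then $xc_{i+k-3}C^-c_iy$ is an $x$--$y$ path on exactly $k$ vertices. Together with the edge $xy$ when $d_G(x,y)=1$, this proves the theorem.

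It also shows what the extra $\frac12$ in the degree bound buys. With only $\delta\ge\frac{n+1}{2}$, Dirac no longer applies to $G-\{x,y\}$. That is why the paper deletes a third vertex $z$ and then has to treat separately the cases where no rainbow Hamiltonian cycle is available.
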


In 2023, Li et al. \cite{Li2023} generalized Theorem \ref{thm1-1} to the setting of graph transversals, i.e. the rainbow version of Theorem \ref{thm1-1}.
A collection $\mathbf{G}$ of $n$-vertex graphs with the same vertex set $V$ is said to be \emph{rainbow panconnected} if for every pair of vertices $x,y\in V$, there exists a rainbow $k$-path joining $x$ and $y$ in $\mathbf{G}$ for every integer $k\in \left[d_{\mathbf{G}}(x,y)+1, n\right]$, where $d_{\mathbf{G}}(x,y)$ is the length of a shortest rainbow path between $x$ and $y$ in $\mathbf{G}$.
In \cite{Li2023}, they also defined a special graph collection as follows, and obtained Theorem \ref{thm1-3}.

\begin{definition}[\upshape\cite{Li2023}]\label{def1-2}
Let $n$ be odd, $Q_1$ be an empty graph with $\frac{n-1}{2}$ vertices and $Q_2$ be a graph on $\frac{n+1}{2}$ vertices with $\delta(Q_2)\geq 1$ such that one component of $Q_2$ is a single edge. Define $\mathbf{F_m}=\{F_1,\dots,F_{m}\}$ as a collection of $n$-vertex graphs with the same vertex set such that $F_i=Q_1 \vee Q_2$ for each $i\in [m]$.
\end{definition}

\begin{theorem}[\upshape\cite{Li2023}]\label{thm1-3}
Let $\mathbf{G}=\{G_1, \dots, G_{n}\}$ be a collection of not necessarily distinct $n$-vertex graphs with the same vertex set $V$. If $\delta(G_i)\geq\frac{n+1}{2}$ for each $i\in [n]$, then either $\mathbf{G}$ is rainbow panconnected or $\mathbf{G}=\mathbf{F_n}$.
\end{theorem}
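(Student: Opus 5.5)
We will prove Theorem \ref{thm1-3} by induction on the path order $k$ together with a rainbow ``switching'' argument, with the exceptional collection $\mathbf{F_n}$ surfacing exactly when switching fails.

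Fix $x,y\in V$. First we find short paths. Since $\delta(G_i)\ge\frac{n+1}{2}$, for any two graphs $G_i,G_j$ we have $|N_{G_i}(x)\cap N_{G_j}(y)|\ge 1$ (for $x\ne y$). Hence there is a rainbow path of length at most $2$ using two distinct colours, so $d_{\mathbf{G}}(x,y)\le 2$. If $xy$ lies in some $G_i$, we still need a rainbow $3$-path. The common neighbourhood count gives at least one choice, and the leftover colours give freedom to make the two edges distinctly coloured.

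The main step is extension: from a rainbow $k$-path $P$ joining $x,y$ with $3\le k<n$, build a rainbow $(k+1)$-path. Let $P$ use colour set $C$ with $|C|=k-1$. Since there are $n$ colours and $k-1\le n-2$, at least two colours $c,c'$ are unused. Take $w\notin V(P)$.

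\begin{itemize}
\item If some edge $v_iv_{i+1}$ of $P$ with colour $\phi(v_iv_{i+1})=a$ has $v_i,v_{i+1}$ both adjacent to $w$, with $wv_i\in G_c$ and $wv_{i+1}\in G_{a}$ (or $G_{c'}$), we insert $w$. The colour $a$ is freed and reused, so rainbowness is kept.
\item To guarantee such an edge we count. $w$ has at least $\frac{n+1}{2}-(n-k)$ neighbours on $P$ in $G_c$, and similarly in $G_{c'}$. A pigeonhole argument on the $k-1$ edges of $P$ then shows that two consecutive vertices $v_i,v_{i+1}$ are such that $v_i\in N_{G_c}(w)$ and $v_{i+1}\in N_{G_{c'}}(w)$, unless these neighbourhoods alternate exactly along $P$.
\item When $k$ is small, so that $w$ may have few neighbours on $P$, we instead first apply a rotation or absorb edges. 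Here we use that $w$ has many neighbours outside $P$, and we extend via a vertex $u\notin P$ with a two-step replacement $v_iwuv_{i+1}$. This actually jumps $k$ by $2$, so we combine it with a $+1$ step obtained by reinserting differently, or by removing one interior vertex.
\end{itemize}

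The main obstacle is the tight case, where every candidate insertion fails. Then the neighbourhoods of every outside vertex on $P$ are exactly the alternate vertices, in every colour class. We would first show that this rigidity propagates: all outside vertices and the alternating positions together form an independent set $Q_1$ of size $\frac{n-1}{2}$, joined completely to the rest. Degree $\frac{n+1}{2}$ forces every vertex of $Q_1$ to be adjacent to all of $Q_2$ in each $G_i$, and forces $Q_2$ to have minimum degree at least $1$. Finally, we would show that if every component of $Q_2$ had at least $3$ vertices, or if the $G_i$ differed, an alternative colour swap gives the missing path. This leaves precisely the single-edge component obstruction, where the Hamiltonian (or near-Hamiltonian) path between the ends of that edge cannot exist by parity, i.e.\ $\mathbf{G}=\mathbf{F_n}$. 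This step requires checking that all $G_i$ coincide, using the fact that the extremal structure is forced separately in each $G_i$ with the same partition, since the partition is determined by the fixed path $P$.
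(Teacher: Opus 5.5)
The extension step, on which the whole proof rests, is justified only for $k$ very close to $n$. For $w\notin V(P)$, the sharpest form of your count is $d_{G_c}(w,P)\ge\frac{n+1}{2}-(n-k-1)=k-\frac{n-3}{2}$. But every insertion fails if, in every colour, the neighbourhood of $w$ on $P$ is one and the same independent set of $P$, and such sets can have up to $\lceil k/2\rceil$ vertices. For $k\le n-3$ your bound is at most $\lceil k/2\rceil$. For $k\le\frac{n-3}{2}$ it is $\le 0$, so $w$ may have no neighbour on $P$ at all. Hence for every $k\le n-3$, i.e.\ for producing all orders $4,\dots,n-2$, the pigeonhole forces no insertion. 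Below the top few values of $k$, a failed insertion does not force any alternating structure either. The fallback is not an argument. ``Rotation or absorb edges'' is unspecified. Converting the $+2$ move $v_iwuv_{i+1}$ into a $+1$ move ``by removing one interior vertex'' $v_j$ needs $v_{j-1}v_{j+1}$ to be an edge in a free colour, and nothing supplies one. Moreover, a particular rainbow $k$-path can be non-extendable by insertion while a rainbow $(k+1)$-path exists elsewhere; this already happens for $k=3$ in a single graph with $\delta\ge\frac{n+1}{2}$. So an induction of this type must allow rerouting, which your sketch does not set up.

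The endgame is also aimed at the wrong place. In $\mathbf{F_n}$, let $\{a,b\}$ be the single-edge component of $Q_2$. The path $a\,q_1\,r_1\,q_2\cdots r_{(n-3)/2}\,q_{(n-1)/2}\,b$, alternating between $Q_1$ and $Q_2\setminus\{a,b\}$, is Hamiltonian. Shortening it gives every odd order. Using one edge inside $Q_2\setminus\{a,b\}$ (which exists since $\delta(Q_2)\ge1$) gives every even order from $6$ to $n-1$. The only missing order is $4$, because $a\,u\,v\,b$ would need adjacent $u,v\in Q_1$. So there is no parity obstruction to Hamiltonian or near-Hamiltonian paths. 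The exception must be detected at the step from order $3$ to order $4$, which is exactly where your count is vacuous. Your propagation claims (rigidity spreading to all outside vertices and all colour classes, all $G_i$ coinciding) are asserted rather than derived.

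For reference, the paper only quotes Theorem~\ref{thm1-3}; it is implied by Theorem~\ref{thm1-5} applied to $\{G_1,\dots,G_{n-1}\}$ and $\{G_2,\dots,G_n\}$. The proof of Theorem~\ref{thm1-5} avoids incremental extension entirely. Orders $3$ and $n$ come from a common neighbour and Theorem~\ref{thm2-1}. For orders $4,\dots,n-1$, it deletes $x$, $y$ and a vertex $z$ with $xz\notin E(G_{n-1})$, and applies Corollary~\ref{cor2-3} to the $(n-3)$-vertex collections $\mathbf{H_j}$. It then routes $x$--$y$ paths of every order through the resulting rainbow Hamiltonian cycle, $(n-4)$-cycle or Hamiltonian path of $V\setminus\{x,y,z\}$ (or through the two extremal structures), using two spare colours at $x$ and $y$. $\mathbf{F}$ appears only in Lemma~\ref{lem8}, when $xy$ is an isolated edge of every $G_i[F\cup\{x,y,z\}]$, which is precisely the order-$4$ obstruction.
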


Clearly, the edges of a rainbow path in Theorem \ref{thm1-3} only come from at most $n-1$ graphs in the collection $\mathbf{G}=\{G_1, \dots, G_{n}\}$. In $2025$, Sun et al. \cite{Sun} established the same minimum degree condition for a collection of $n-1$ graphs to be rainbow panconnected.

\begin{theorem}[\upshape\cite{Sun}]\label{thm1-4}
Let $n\geq 4$ be an even integer or a sufficiently large odd integer, $\mathbf{G}=\{G_1, \dots, G_{n-1}\}$ be a collection of not necessarily distinct $n$-vertex graphs with the same vertex set $V$. If $\delta(G_i)\geq\frac{n+1}{2}$ for each $i\in [n-1]$, then either $\mathbf{G}$ is rainbow panconnected or $\mathbf{G}=\mathbf{F_{n-1}}$.
\end{theorem}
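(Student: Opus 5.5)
The plan is to argue by induction on the number of vertices of the path, and to reduce the rainbow problem to the single-graph situation through a standard switching argument. Fix $x,y\in V$ and write $d=d_{\mathbf{G}}(x,y)$. First we check the base range. Since $\delta(G_i)\geq\frac{n+1}{2}$, any two vertices have at least $2$ common neighbours in each $G_i$. Hence $d\le 2$, and short rainbow paths of $2$ or $3$ vertices joining $x$ and $y$ exist directly. We then aim for the inductive step: given a rainbow $k$-path $P$ from $x$ to $y$ with $k<n$, using a set $C\subseteq[n-1]$ of $k-1$ colours, produce a rainbow $(k+1)$-path. There are then $n-k\ge 1$ unused colours, and we also pick a vertex $v\notin V(P)$ and an unused colour $c$.

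The main insertion step goes as follows. Write $P=x=u_1u_2\dots u_k=y$. We look for an index $j$ such that $u_jv$ and $vu_{j+1}$ can both be coloured. One of these two edges gets the unused colour $c$. The other reuses the colour of the edge $u_ju_{j+1}$, or takes a second unused colour when $n-k\ge 2$.

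In the generic case $k\le n-2$ there are at least two unused colours $c,c'$, and counting is easy. The vertex $v$ has at least $\frac{n+1}{2}-(n-k)+1$ neighbours on $P$ in $G_c$, and similarly in $G_{c'}$. Pigeonhole over the $k-1$ consecutive pairs then yields $j$ with $u_jv\in G_c$ and $vu_{j+1}\in G_{c'}$, provided $2\left(\frac{n+1}{2}-(n-k)\right)>k$. This inequality may fail for small $k$. In that regime we instead exploit that $V\setminus V(P)$ is large: we find an edge $vw$ outside $P$ and insert the $3$-path $u_jvu_{j+1}$, or reroute through $w$ to replace an edge $u_ju_{j+1}$. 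Alternatively, we use the freedom in choosing $v$ among the $n-k$ outside vertices. This is where the bound $\frac{n+1}{2}$, rather than $\frac n2$, gives the extra slack.

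The critical case is $k=n-1$, building the rainbow Hamiltonian $x$–$y$ path, where exactly one colour $c$ is unused and $v$ is the unique outside vertex. Here we would use a rotation/recolouring argument. Consider the sets $A=\{j: u_jv\in G_c\}$ and $B=\{j: vu_{j+1}\in G_{\phi(u_ju_{j+1})}\}$, together with the symmetric variant in which $c$ is placed on the second edge. If every variant fails, then for every colour $i$ the neighbourhood of $v$ in $G_i$ avoids shifted copies of $N_{G_c}(v)$ along $P$. Each such neighbourhood has size at least $\frac{n+1}{2}$ on a path of $n-1$ vertices, so these constraints force $N(v)$ to be essentially every other vertex of $P$. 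The same reasoning applies after swapping $v$ with path vertices, via Pósa-type rotations, so the same structure propagates. We conclude that every $G_i$ contains $Q_1\vee Q_2$ on a bipartition with $|Q_1|=\frac{n-1}{2}$ independent. This forces $n$ odd, and the $x$–$y$ pair lies in $Q_1$ or otherwise cannot be joined by a Hamiltonian path. Degree counting then identifies $\mathbf{G}=\mathbf{F_{n-1}}$.

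I expect this extremal stability step to be the main obstacle. The colour-swapping must be done carefully because each $G_i$ may differ. We also must show that all $G_i$ share the same bipartition, and not merely that each is individually near-extremal. I would first try to prove that if some $G_i$ is not of the form $Q_1\vee Q_2$, then recolouring the edge of $P$ to use colour $i$ gives enough flexibility to complete the insertion. For even $n$ and large odd $n$, the conclusion is exactly Theorem \ref{thm1-4}, so one may alternatively invoke it for the case with $n-1$ graphs and reduce the present statement to it.
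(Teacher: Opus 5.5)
\textbf{Gap 1: the induction on $k$ does not work in the range you call easy.} Your pigeonhole condition $2\bigl(\frac{n+1}{2}-(n-k)\bigr)>k$ simplifies to $k>n-1$, so it fails for every $k\le n-2$. That is exactly the range with two free colours. Even the correct count does not help: $|A|+|B|\ge 2k-n+1$ against $k-1$ slots needs $k\ge n-1$. For short paths the outside vertex $v$ can have all of its $\frac{n+1}{2}$ neighbours off $P$ (e.g.\ $k=4$, $n\ge 11$). A given short rainbow path need not be extendable at all, so the fallback (``reroute through $w$'', ``freedom in choosing $v$'') is not an argument.

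\textbf{Gap 2: you place the extremal case at the wrong step.} Take $\mathbf{F_{n-1}}$ with $x,y$ the single-edge component of $Q_2$. Then $N_{F_i}(x)=Q_1\cup\{y\}$ and $N_{F_i}(y)=Q_1\cup\{x\}$. A $4$-vertex $x$--$y$ path would need two adjacent vertices of the independent set $Q_1$, so none exists. The Hamiltonian $x$--$y$ path, however, does exist: $xq_1a_1q_2\cdots a_{\frac{n-3}{2}}q_{\frac{n-1}{2}}y$, with $q_i\in Q_1$ and $a_j\in Q_2\setminus\{x,y\}$. So $\mathbf{F_{n-1}}$ must be detected at the step $3\to 4$. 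The step $n-1\to n$ never meets an exception: by Theorem \ref{thm2-1}, $\mathbf{G}$ is always rainbow Hamiltonian connected under this degree condition. Your stability sketch (``the $x$--$y$ pair lies in $Q_1$ or otherwise cannot be joined by a Hamiltonian path'') is therefore aimed at a case with no exception, and its conclusion is false. Finally, your last suggestion, invoking Theorem \ref{thm1-4} to prove itself, is circular.

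\textbf{The missing idea: produce paths of all lengths at once.} The paper does this for odd $n$; even $n$ is quoted from \cite{Sun}, and the paper does not reprove it.
\begin{itemize}
\item The case $k=n$ comes from Theorem \ref{thm2-1}.
\item Pick $z$ with $xz\notin E(G_{n-1})$ and delete $x,y,z$. This leaves collections $\mathbf{H_j}$ on $n-3$ vertices with $\delta\ge\frac{n-5}{2}$.
\item Find a rainbow Hamiltonian cycle, a rainbow $(n-4)$-cycle, or a rainbow Hamiltonian path there. The last is controlled by the end-degree count of Lemma \ref{lem5}.
\item From this structure, cut a segment of any prescribed number $k-2$ of vertices whose ends attach to $x$ through $G_{n-1}$ and to $y$ through another reserved colour. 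The pigeonhole now succeeds because the reserved colours lie outside the structure and the cycle has only $n-3$ positions, so $|I_k|+|I_0|\ge n-2$.
\item If none of these structures exists, Corollary \ref{cor2-3} forces $H_i=H_i[F]\vee H_i[I]$. Then $\mathbf{F_{n-1}}$ appears exactly when no $G_i$ has an edge between $\{x,y\}$ and $F\cup\{z\}$, which is the short-path obstruction above.
\end{itemize}
Without a global structure of this kind, plus a dichotomy that captures the short-path obstruction, your proposal does not prove the statement.
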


However, Theorem \ref{thm1-4} requires $n$ to be sufficiently large when $n$ is an odd integer. The authors in \cite{Sun} conjectured that Theorem \ref{thm1-4} holds for all $n$. Motivated by this, we confirm the conjecture and obtain the following result.

\begin{theorem}\label{thm1-5}
Let $n\geq3$ and $\mathbf{G}=\{G_1, \dots, G_{n-1}\}$ be a collection of not necessarily distinct $n$-vertex graphs with the same vertex set $V$. If $\delta(G_i)\geq\frac{n+1}{2}$ for each $i\in [n-1]$, then either $\mathbf{G}$ is rainbow panconnected or $\mathbf{G}=\mathbf{F_{n-1}}$.
\end{theorem}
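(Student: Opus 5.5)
Since Theorem \ref{thm1-4} already handles every even $n\ge 4$ and all sufficiently large odd $n$, it remains to treat $n=3$ and odd $n$ below the threshold of \cite{Sun}. Rather than rely on an unspecified threshold, I would give a uniform argument for odd $n\geq 3$, and for completeness for all $n$. It rests on a rainbow version of the classical ``absorb one vertex'' argument. For $n=3$ the condition $\delta(G_i)\ge 2$ forces $G_1=G_2=K_3$, and every pair of vertices is joined by rainbow paths of $2$ and $3$ vertices, so the statement holds; I assume $n\ge 4$ from now on.

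The key step is an \emph{extension lemma}. Suppose $P$ is a rainbow $x$--$y$ path on $k<n$ vertices using colour set $C\subseteq[n-1]$ with $|C|=k-1$. Then there is a rainbow $x$--$y$ path on $k+1$ vertices, unless the configuration is very rigid. To prove it, pick $v\notin V(P)$ and two unused colours $a,b\in[n-1]\setminus C$; these exist because $n-1-(k-1)\ge 2$. Now look for an edge $u_ju_{j+1}$ of $P$ with $vu_j\in G_a$ and $vu_{j+1}\in G_b$ (or with $a,b$ swapped). Replace $u_ju_{j+1}$ by $u_jvu_{j+1}$, which frees the colour of $u_ju_{j+1}$. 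This gives a rainbow path on $k+1$ vertices. Since $d_{G_a}(v),d_{G_b}(v)\ge \frac{n+1}{2}$ and $v$ has at most $n-k$ neighbours outside $P$, a counting argument of Williamson type over the $k-1$ edges of $P$ gives such a $j$ whenever $k$ is small. When $k$ is large, counting alone fails. Then I would also use a colour-switching trick: exchange the colour of some path edge $e$ with $a$ when $e\in G_a$, giving more choices of free colours. I would also use the standard rotation: if $v$ is not insertable, find $w\notin V(P)$ together with a chord structure, and exchange a vertex of $P$ for $v$ and another vertex. Combined with the base case $k=d_{\mathbf G}(x,y)+1\le 3$, this settles panconnectivity, noting that the degree bound gives $d_{\mathbf G}(x,y)\le 2$.

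The final step, $k=n-1\to n$ (rainbow Hamiltonian $x$--$y$ paths), is where the extremal collection $\mathbf F_{n-1}$ appears, and I expect it to be the main obstacle. Here $n-1-(k-1)=1$, so only a single free colour $a$ remains. Insertion of the last vertex $v$ then requires two consecutive vertices $u_j,u_{j+1}$ with $vu_j\in G_a$ and $vu_{j+1}\in G_{\phi(u_ju_{j+1})}$. My plan is to fix the colouring $\phi$ and count, for each $j$, whether $vu_j\in G_a$. Then I would use $d_{G_i}(v)\ge\frac{n+1}{2}$ for every $i$, and show that a failure forces $N_{G_a}(v)$ and the sets $N_{G_{\phi(e)}}(v)$ to alternate along $P$. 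With $n$ odd, this forces an independent set of size $\frac{n-1}{2}$ and a complementary part of size $\frac{n+1}{2}$ in every $G_i$. The stability step is then to show that all $G_i$ coincide with $Q_1\vee Q_2$ and that $Q_2$ has an isolated edge component. That isolated edge is exactly what blocks the Hamiltonian path between its two endpoints. To show that all $G_i$ coincide, I would permute colours along $P$, which is possible because any $G_i$ not of this form would yield an insertable pair, and I expect this to need careful case analysis.
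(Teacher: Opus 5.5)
There is a genuine gap, and it starts with where you locate the extremal family. You expect $\mathbf{F_{n-1}}$ to appear at the final step $k=n-1\to n$, and you plan a stability argument there that ends with $\mathbf{G}=\mathbf{F_{n-1}}$. But $\mathbf{F_{n-1}}$ is rainbow Hamiltonian connected. Let $xy$ be the single-edge component of $Q_2$, let $Q_1=\{q_1,\dots,q_{(n-1)/2}\}$, and let $V(Q_2)\setminus\{x,y\}=\{r_1,\dots,r_{(n-3)/2}\}$. Then $xq_1r_1q_2\cdots r_{(n-3)/2}q_{(n-1)/2}y$ uses only join edges of the common graph $Q_1\vee Q_2$, so any bijective colouring of its $n-1$ edges makes it rainbow. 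More generally, Theorem~\ref{thm2-1} gives rainbow Hamiltonian connectedness under $\delta(G_i)\ge\frac{n+1}{2}$ with no exceptional family, and the paper uses it to settle $k=n$ immediately. What $\mathbf{F_{n-1}}$ actually lacks is a $4$-vertex $x$--$y$ path, which is required because $d_{\mathbf{G}}(x,y)=1$. In $xaby$ you need $a\in N(x)\setminus\{y\}=V(Q_1)$ and $b\in N(y)\setminus\{x\}=V(Q_1)$, but $Q_1$ is independent. So a correct analysis of the last step can never conclude $\mathbf{G}=\mathbf{F_{n-1}}$. The exception has to be detected at the step $3\to 4$, which is exactly where your plan says a routine count suffices.

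That count is also backwards. If $P$ has $k$ vertices and $v\notin V(P)$, the degree condition only guarantees $d_{G_a}(v,P)\ge\frac{n+1}{2}-(n-k-1)=k-\frac{n-3}{2}$. This is vacuous for $k\le\frac{n-3}{2}$. For example, in $\mathbf{F_{n-1}}$ no vertex can be inserted into $xqy$ with $q\in Q_1$, since it would have to lie in $V(Q_1)\cap V(Q_2)=\emptyset$. So ``insert one outside vertex into the current path'' cannot work for short paths unless the path is chosen globally. For long paths, the colour-switching and rotation steps are only named; you never show they succeed with the few spare colours left. What is missing is a global structural handle on $V\setminus\{x,y\}$. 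The paper gets it in three steps:
\begin{itemize}
\item It deletes $x$, $y$ and a vertex $z$ with $xz\notin E(G_{n-1})$.
\item It applies the stability result for rainbow Hamiltonian paths (Corollary~\ref{cor2-3}) to the $(n-3)$-vertex collections $\mathbf{H_j}$.
\item It attaches $x$ and $y$ to the resulting rainbow cycles or paths using the spare colours.
\end{itemize}
In that argument $\mathbf{F_{n-1}}$ arises only in the join-type case of Lemma~\ref{lem8}, and precisely through the $k=4$ path.
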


Here, we introduce some basic notation and concepts in graph theory. In this paper, we only consider finite and simple graphs.
Let $G=(V(G),E(G))$ be a simple graph with vertex set $V(G)$ and edge set $E(G)$.
For a vertex $u\in V(G)$, the set of neighbours of a vertex $u$ in $G$ is denoted by $N_{G}(u)$, and $d_{G}(u)=|N_{G}(u)|$ is the \emph{degree} of $u$ in $G$.
Let $\delta(G)=\min\{d_{G}(u): u\in V(G)\}$ denote the \emph{minimum degree} of $G$ and $\sigma_2(G)=\min\{d_{G}(u)+d_{G}(v): u,v\in V(G),\, uv\notin E(G)\}$ denote the \emph{minimum degree sum} of any pair of nonadjacent vertices in $G$.
For any two vertices $u,v\in V(G)$, $d_{G}(u,v)$ is defined as the length of a shortest path between $u$ and $v$. For two disjoint graphs $G_1$ and $G_2$, the \emph{join} of $G_1$ and $G_2$, denoted by $G_1\vee G_2$, is the graph obtained from $G_1$ and $G_2$ by adding all edges between $V(G_1)$ and $V(G_2)$.
For $U\subseteq V(G)$, the \emph{induced subgraph} $G[U]$ satisfies $V(G[U])=U$ and $E(G[U])=\{uv\in E(G): u,v\in U\}$.
For two disjoint subsets $U,W\subset V(G)$, $G[U,W]$ denotes the bipartite subgraph of $G$ with vertex bipartition $(U,W)$ and edge set $E(G[U,W])=\{uw\in E(G): u\in U,w\in W$\}. We use $E_G[U,W]$ for $E(G[U,W])$ for short.
For positive integers $a<b$, let $[a]=\{1,2,\dots,a\}$ and $[a,b]=\{a,a+1,\dots,b\}$. For any terminology and notation that are not explicitly defined, we refer the reader to \cite{Bondy}.

\section{Preliminaries}
In this section, we present some necessary definitions and known results that will be used later.

For a path $P=u_1u_2\cdots u_s$, we denote the subpath of $P$ between $u_i$ and $u_j$ by $u_iPu_j$, where $i,j\in[s]$. We use $|P|$ to represent the number of vertices of path $P$. For two distinct vertices $v_i,v_j$ on a cycle $C=v_1v_2\cdots v_sv_1$ with $i<j$, we denote the two paths between them as the forward segment $v_iCv_j=v_iv_{i+1}\cdots v_j$ and the backward segment $v_iC^{-}v_j=v_iv_{i-1}\cdots v_j$, where subscripts are taken modulo $s$.
For a subgraph $H$ of $G$ and a vertex $u\in V(G)\setminus V(H)$, let $N_{G}(u,H)=N_{G}(u)\cap V(H)$ and $d_{G}(u,H)=|N_{G}(u,H)|$.

For a collection $\mathbf{G}$ of $n$-vertex graphs with the same vertex set $V$, we say $\mathbf{G}$ is \emph{rainbow Hamiltonian connected} if for every pair of vertices $x,y\in V$, there exists a rainbow Hamiltonian path joining $x$ and $y$ in $\mathbf{G}$. Sun et al. \cite{Sun} gave a minimum degree condition to guarantee that a graph collection is rainbow Hamiltonian connected.

\begin{theorem}[\upshape\cite{Sun}]\label{thm2-1}
Let $n\geq 3$ and $\mathbf{G}=\{G_1, \dots, G_{n-1}\}$ be a collection of not necessarily distinct $n$-vertex graphs with the same vertex set $V$. If $\delta(G_i)\geq\frac{n+1}{2}$ for each $i\in [n-1]$, then $\mathbf{G}$ is rainbow Hamiltonian connected.
\end{theorem}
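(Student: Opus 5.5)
We propose to prove Theorem \ref{thm2-1} by a longest-path argument with insertion and color-switching. Fix distinct $x,y\in V$. First we check that some rainbow $x$--$y$ path exists. If $xy\in E(G_1)$ we are done. Otherwise $|N_{G_1}(x)|+|N_{G_2}(y)|\geq n+1>n$, and both neighborhoods lie in $V\setminus\{x,y\}$, which has $n-2$ vertices. So there is a common vertex $w$ with $xw\in E(G_1)$ and $wy\in E(G_2)$. This uses $n-1\geq 2$ colors, which is fine for $n\geq 3$.

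Among all rainbow $x$--$y$ paths, choose $P=v_1v_2\cdots v_k$ with $v_1=x$ and $v_k=y$, with a coloring $\phi$, such that $k$ is maximum. Subject to that, we choose $(P,\phi)$ to maximize an auxiliary potential, for instance the number of edges of $P$ that also lie in the unused graphs. Suppose $k<n$, let $u\notin V(P)$, and let $U=[n-1]\setminus\phi(E(P))$ be the set of unused colors, so $|U|=n-k\geq 1$. To insert $u$ between $v_i$ and $v_{i+1}$ we need two colors. One is the freed color $c_i=\phi(v_iv_{i+1})$. The other is some $d\in U$. We need either $uv_i\in E(G_{c_i})$ and $uv_{i+1}\in E(G_d)$, or the symmetric pair.

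Fix $d\in U$. Then $d_{G_d}(u,P)\geq \frac{n+1}{2}-(n-k-1)$, since $u$ has at most $n-k-1$ neighbors off $P$. A similar bound holds for $d_{G_{c_i}}(u,P)$ for each fixed $i$. The difficulty is that the color $c_i$ depends on the index $i$, so the usual Ore pigeonhole on indices does not apply directly.

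The key step is to overcome this dependence. We first try a color-rotation argument. If $uv_{i+1}\in E(G_d)$ but $uv_i\notin E(G_{c_i})$, we recolor $v_iv_{i+1}$ with $d$ whenever $v_iv_{i+1}\in E(G_d)$. This frees $c_i$ into $U$ without shortening $P$. We then repeat, using the extremal choice of $\phi$ to force that every edge of $P$ whose color could be swapped already has been. The conclusion we aim for is that, in an extremal configuration, each $G_{c_i}$ behaves like $G_d$ with respect to $u$ near position $i$.

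We then run the standard pigeonhole. Set $A=\{i: uv_{i+1}\in E(G_d)\}$ and $B=\{i:uv_i\in E(G_{c_i})\}$, both subsets of $[k-1]$. We have $|A|\geq \frac{n+1}{2}-(n-k)$, and $B$ satisfies a comparable bound after the swapping reduction. Then $|A|+|B|>k-1$ once $k$ is close to $n$, which gives an insertion and contradicts maximality. For small $k$, the large degree $\frac{n+1}{2}$ lets $u$ find two unused colors $d,d'\in U$ directly when $|U|\geq 2$. In that regime we instead use rotation (P\'osa-type) moves at the ends, or a second outside vertex, to extend $P$.

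We expect the main obstacle to be exactly this step: controlling the interaction between the freed color $c_i$ and $u$ when $|U|$ is small, especially $|U|=1$, i.e.\ $k=n-1$. Here the count is tight, $\frac{n+1}{2}$ against roughly $\frac{n}{2}$, and an extremal structure resembling $\mathbf{F}_{n-1}$ may appear. In that case we would argue structurally. Suppose no insertion exists. Then $N_{G_d}(u)$ and the set of successors of $N_{G_{c_i}}(u)$ along $P$ must be disjoint for every choice of colors. This forces $P$ to alternate between $N(u)$ and its complement in a nearly bipartite fashion. Finally, we would show that $\delta(G_i)\geq\frac{n+1}{2}$ produces an edge inside one side, and this edge permits a crossing reroute of $P$ through $u$.
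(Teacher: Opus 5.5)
\textbf{There is a genuine gap: the decisive step is announced rather than carried out.} The paper does not prove this statement; it quotes it from Sun et al.\ and uses it as a black box. So your argument has to stand on its own.

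Your insertion count needs the index-dependent set $B=\{i: uv_i\in E(G_{c_i})\}$ to be large, but the hypothesis gives no lower bound on it. For $n\geq 7$, take $G_{c_i}=K_n-\{uv_i,uv_{i+1}\}$ for every $i$. This respects $\delta\geq\frac{n+1}{2}$, and when $|U|=1$ there is then no way at all to insert $u$ into the given $(P,\phi)$, because the pool of colours available at position $i$ is exactly $U\cup\{c_i\}$. So the contradiction with maximality must come from recolouring, and that is the missing part:
\begin{itemize}
\item Recolouring $v_iv_{i+1}$ from $c_i$ to $d$ leaves the pool $U\cup\{c_i\}$ at position $i$ unchanged, so it cannot create an insertion there.
\item It requires $v_iv_{i+1}\in E(G_d)$, which nothing guarantees.
\item Once $d$ sits on the path, the fact $uv_{i+1}\in E(G_d)$ can no longer be used at position $i+1$.
\item The potential function, and the conclusion that $G_{c_i}$ \emph{behaves like} $G_d$ near position $i$, are never made precise.
\end{itemize}
Your sentence that $B$ satisfies a comparable bound after the swapping reduction is precisely what needs proving.

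The other regimes are not handled either. With two unused colours $d,d'$, your count gives $|A|+|B'|\geq 2k-n+1$. This exceeds $k-1$ only if $k\geq n-1$, i.e.\ only when $|U|=1$ and $d'$ does not exist. So for every $k\leq n-2$, single-vertex insertion is not forced. For example, with $n=9$ and $k=7$, let $u$ be adjacent in both $G_d$ and $G_{d'}$ to $v_1,v_3,v_5,v_7$ and to the other outside vertex; the two index sets are then disjoint. You would need to absorb several outside vertices at once, for instance a path through a component of $G_d-V(P)$, with its own colour bookkeeping. The suggested fallback of P\'osa-type rotations at the ends is not available, since $x$ and $y$ are prescribed endpoints and a rotation moves an endpoint.

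The tight case $k=n-1$ is only described. A crossing reroute such as $v_1Pv_iuv_jPv_{i+1}v_{j+1}Pv_k$ (with $i<j$) needs its three new edges coloured injectively from $\{c_i,c_j,d\}$. The relevant graphs again depend on $i$ and $j$, which is the same unresolved difficulty. Note also that this statement has no exceptional family: $\mathbf{F_{n-1}}$ satisfies the hypothesis, so any structural endgame must still produce the path there.

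As it stands, this is a strategy outline. The missing ingredients are an actual mechanism for controlling the varying freed colours $c_i$, and a treatment of paths that miss two or more vertices.
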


Li et al. \cite{Li2023} studied the existence of rainbow Hamiltonian path.

\begin{theorem}[\upshape\cite{Li2023}]\label{thm2-2}
Let $\mathbf{G}=\{G_1, \dots, G_{n}\}$ be a collection of not necessarily distinct $n$-vertex graphs with the same vertex set $V$. If $\sigma_2(G_i)\geq n-2$ for each $i\in [n]$, then one of the following statements holds:
\begin{enumerate}[label=(\roman*), font=\upshape, itemsep=2pt, align=left, leftmargin=1em]
 \item $\mathbf{G}$ has a rainbow Hamiltonian path;
 \item $G_1=\dots=G_{n}=K_\ell \cup K_{n-\ell}$, where $\ell \in [n-1]$;
 \item $n$ is even and there is a partition $(H, I)$ of $V$ with $|H|=\frac{n-2}{2}$ and $|I|=\frac{n+2}{2}$. For each $i\in [n]$, $G_i=G_i[H]\vee G_i[I]$, where $G_i[I]$ is an independent set and $G_i[H]$ is an arbitrary graph.
\end{enumerate}
\end{theorem}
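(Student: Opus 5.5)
The plan is the classical longest-path/rotation argument of Ore type, run in a rainbow setting. The colour bookkeeping is handled by keeping a pool of unused colours and by recolouring edges of the current path. I assume (i) fails and aim to force (ii) or (iii).

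\textbf{Step 1 (extremal path).} Among all rainbow paths in $\mathbf{G}$, choose $P=v_1v_2\cdots v_k$ with $k$ maximum. Subject to this, maximise the number of colours $i$ such that $G_i$ contains an edge of $P$ with some recolouring.

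Assume $k<n$. Then $P$ uses only $k-1$ colours, so at least $n-k+1\ge 2$ colours are unused. For any unused colour $c$, maximality gives $N_{G_c}(v_1)\cup N_{G_c}(v_k)\subseteq V(P)$.

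\textbf{Step 2 (closing $P$ into a rainbow cycle).} Suppose $v_1v_k\notin E(G_c)$ for an unused colour $c$. The bound $\sigma_2(G_c)\ge n-2$ together with the fact that all these neighbours lie in $P$ gives
\[
d_{G_c}(v_1)+d_{G_c}(v_k)\ge n-2\ge k-1 .
\]
The pigeonhole argument used in the proof of Ore's theorem then yields an index $i$ with $v_1v_{i+1},\,v_iv_k\in E(G_c)$, or the counting is tight.

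To turn the rotation into a rainbow cycle, I need two fresh colours: one for $v_1v_{i+1}$ and one for $v_iv_k$. The edge $v_iv_{i+1}$ is removed, which frees its colour. So together with two unused colours $c,c'$ the budget suffices. I will run the counting with $N_{G_c}(v_1)$ and $N_{G_{c'}}(v_k)$, using that unused colours are interchangeable.

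This gives a rainbow cycle $C$ on $V(P)$ with at least one colour still unused.

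\textbf{Step 3 (extension).} If some vertex $u\notin V(C)$ is adjacent in some unused $G_c$ to a vertex of $C$, then opening $C$ there gives a longer rainbow path, a contradiction.

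Note that a pair $u\notin V(C)$, $v\in V(C)$ is nonadjacent in $G_c$ whenever $v\notin N_{G_c}(u)$. If no such extension exists, the degree-sum condition applied to these pairs forces the following. Both $V(C)$ and $V\setminus V(C)$ induce nearly complete graphs in every unused colour, and there are no edges between them. Thus $G_c=K_\ell\cup K_{n-\ell}$ for each unused $c$.

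The tight cases of the Step 2 count are treated the same way. There the neighbourhoods of the endpoints interleave exactly. This produces the alternating structure in which every other vertex of $P$ lies in an independent set $I$. Counting gives $|I|=\frac{n+2}{2}$ and $|H|=\frac{n-2}{2}$, so $n$ must be even, and $G_c=G_c[H]\vee G_c[I]$.

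\textbf{Step 4 (propagating the structure to all colours).} This is the step I expect to be the main obstacle. So far the extremal structure is known only for the unused colours. To reach (ii) or (iii) I must show that every $G_j$ has the same structure.

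For a used colour $j$, I would swap $j$ with an unused colour $c$ on an edge $e$ of $P$. This is allowed whenever $e\in E(G_c)$, which holds because the structure forces the edges of $P$ to lie inside the two cliques, or between $H$ and $I$, in $G_c$. After the swap, $j$ becomes unused, and Steps 1--3 apply to it.

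I then need two further facts:
\begin{enumerate}[label=(\alph*), font=\upshape, itemsep=2pt, align=left, leftmargin=1em]
 \item the partition obtained for $G_j$ is the same partition as for $G_c$;
 \item in case (ii), with equality in $\sigma_2$, each $G_i$ is exactly $K_\ell\cup K_{n-\ell}$ and not a proper subgraph.
\end{enumerate}
For (a), I would argue that any edge of $G_j$ crossing the partition of $G_c$, or lying inside $I$, can be combined with the Hamiltonian structure of $G_c$ to build a rainbow Hamiltonian path directly. For (b), I would argue that any missing edge inside a clique would push a degree sum below $n-2$.
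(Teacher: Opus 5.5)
The paper does not prove this result; it imports it from Li et al. I therefore judge your outline on its own merits.

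\textbf{What works.} The non-tight situations are essentially sound. If $k\le n-2$, or if $k=n-1$ and a rainbow cycle $C$ on $V(P)$ exists, the extension argument together with $\sigma_2$ gives $G_c=K_k\cup K_{n-k}$ for every unused $c$. Propagation is then automatic. Every edge of $C$ lies in the clique $V(C)$ of $G_c$, so the swap is legal. After the swap, $G_j$ must split along the same partition $V(C)$ versus $V\setminus V(C)$, and equality in the degree count already makes both parts complete. So your concerns (a) and (b) do not arise in case (ii).

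\textbf{A repair in Step 2.} The condition $\sigma_2(G_c)\ge n-2$ bounds $d_{G_c}(v_1)+d_{G_c}(v_k)$, not the cross sum $d_{G_c}(v_1)+d_{G_{c'}}(v_k)$. So ``unused colours are interchangeable'' is not a justification. The fix is as follows. If $v_1v_k\in E(G_c)\cup E(G_{c'})$, close the cycle directly. Otherwise add the two single-colour inequalities to get
\[
\bigl[d_{G_c}(v_1)+d_{G_{c'}}(v_k)\bigr]+\bigl[d_{G_{c'}}(v_1)+d_{G_c}(v_k)\bigr]\ge 2(n-2),
\]
and take the better of the two assignments.

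\textbf{The genuine gap.} The tight case $k=n-1$ is exactly where alternative (iii) lives and is the real content of the theorem. Your proposal asserts its conclusion but does not prove it. There are two problems.

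First, tightness only tells you that $S=\{i:v_1v_{i+1}\in E(G_c)\}$ and $T=\{i:v_iv_k\in E(G_{c'})\}$ partition $[1,k-1]$. It does not give alternation. For example, $S$ an initial segment and $T$ a final segment is consistent with the count. In the rainbow setting you do not even know $1\in S$, since $v_1v_2$ carries another colour and need not lie in $G_c$.

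Ruling out non-alternating configurations requires the vertex $u\notin V(P)$ together with rotations. Here the colour bookkeeping is essential. Rotating along $v_1v_{i+1}\in E(G_c)$ gives a maximum rainbow path ending at $v_i$. It spends $c$ and frees the colour of $v_iv_{i+1}$. So maximality only forbids $uv_i$ in $G_{c'}$ and in the freed colour, not in $G_c$. You would have to combine these cross-colour forbidden sets with lower bounds on $d_{G_c}(u)$ from $\sigma_2$ to reach a contradiction or the alternating pattern. None of this is done.

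Second, even alternating endpoint neighbourhoods only describe $v_1$ and $v_k$. To conclude that $I=\{v_1,v_3,\dots,v_{n-1},u\}$ is independent in $G_c$, you must show that every vertex of $I$ is an endpoint of a maximum rainbow path with suitable colours still unused, and redo the count there. Once independence is known, the complete join to $H$ does follow from $\sigma_2$. Independence itself is not supplied.

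Your Step 4 for case (iii), item (a), rests on this missing structure. So as written, the proposal does not establish alternative (iii).
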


Note that $\delta(G_i)\geq\frac{n}{2}-1$ implies $\sigma_2(G_i)\geq n-2$ for each $i\in[n]$. Therefore, Theorem \ref{thm2-2} still holds when we replace its condition with $\delta(G_i)\geq\frac{n}{2}-1$.
In particular, condition $\delta(G_i)\geq\frac{n}{2}-1$ implies that for (ii) of Theorem \ref{thm2-2}, we have $\ell-1\geq\frac{n}{2}-1$ and $n-\ell-1\geq \frac{n}{2}-1$, which yields $\ell=n-\ell=\frac{n}{2}$, i.e., $n$ is even.
Therefore, the following corollary holds.

\begin{corollary}\label{cor2-3}
Let $\mathbf{G}=\{G_1, \dots, G_{n}\}$ be a collection of not necessarily distinct $n$-vertex graphs with the same vertex set $V$. If $\delta(G_i)\geq\frac{n}{2}-1$ for each $i\in [n]$, then one of the following statements holds:
\begin{enumerate}[label=(\roman*), font=\upshape, itemsep=2pt, align=left, leftmargin=1em]
\item $\mathbf{G}$ has a rainbow Hamiltonian path;
\item $n$ is even and $G_1=\dots=G_{n}=K_{\frac{n}{2}}\cup K_{\frac{n}{2}}$;
\item $n$ is even and there is a partition $(H, I)$ of $V$ with $|H|=\frac{n-2}{2}$ and $|I|=\frac{n+2}{2}$. For each $i\in [n]$, $G_i=G_i[H]\vee G_i[I]$, where $G_i[I]$ is an independent set and $G_i[H]$ is an arbitrary graph.
\end{enumerate}
\end{corollary}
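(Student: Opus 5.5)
The corollary is a direct specialization of Theorem \ref{thm2-2}, so the plan is to check that its hypothesis holds and then sharpen outcome (ii) using the stronger degree assumption.

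First, I verify that the hypothesis of Theorem \ref{thm2-2} is satisfied. If $\delta(G_i)\geq\frac{n}{2}-1$, then for any two nonadjacent vertices $u,v$ of $G_i$ we have $d_{G_i}(u)+d_{G_i}(v)\geq 2(\frac{n}{2}-1)=n-2$. If $G_i$ is complete, then $\sigma_2(G_i)$ is a minimum over an empty set and the condition holds vacuously, under the usual convention $\sigma_2=\infty$. Either way $\sigma_2(G_i)\geq n-2$ for every $i\in[n]$. Theorem \ref{thm2-2} then gives that one of its outcomes (i), (ii), (iii) holds. Outcomes (i) and (iii) are literally outcomes (i) and (iii) of the corollary, so nothing more is needed for them.

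The only real work is outcome (ii) of Theorem \ref{thm2-2}, where $G_1=\dots=G_n=K_\ell\cup K_{n-\ell}$ for some $\ell\in[n-1]$.

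1. A vertex in the $K_\ell$ component has degree $\ell-1$, and a vertex in the $K_{n-\ell}$ component has degree $n-\ell-1$.
2. Both components are nonempty because $\ell\in[n-1]$, so the minimum degree condition applies to each. It gives $\ell-1\geq\frac{n}{2}-1$ and $n-\ell-1\geq\frac{n}{2}-1$, that is, $\ell\geq\frac{n}{2}$ and $n-\ell\geq\frac{n}{2}$.
3. Adding these two inequalities gives $n\geq n$ with equality, so both must be equalities. Hence $\ell=n-\ell=\frac{n}{2}$.
4. Since $\ell$ is an integer, $n$ is even, and $G_1=\dots=G_n=K_{\frac{n}{2}}\cup K_{\frac{n}{2}}$. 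This is exactly outcome (ii) of the corollary.

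There is no genuine obstacle. The only point needing care is the degenerate case of a complete $G_i$ in the $\sigma_2$ definition, handled by the vacuous-minimum convention in the first step.
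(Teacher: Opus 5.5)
Your proposal is correct and follows essentially the same route as the paper: check that $\delta(G_i)\geq\frac{n}{2}-1$ gives $\sigma_2(G_i)\geq n-2$, apply Theorem~\ref{thm2-2}, and in outcome (ii) use the degree bound on each clique component to force $\ell=n-\ell=\frac{n}{2}$, so $n$ is even. Your remark about complete $G_i$, where $\sigma_2$ is a minimum over an empty set, is a harmless detail that the paper leaves implicit.
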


\section{Proof of Theorem \ref{thm1-5}}
This section will be dedicated to proving Theorem \ref{thm1-5}.

Since Theorem \ref{thm1-4} implies that Theorem \ref{thm1-5} holds when $n$ is even, and the conclusion is trivial for $n=3$, we may assume that $n(\geq5)$ is an odd integer in the following proof.

\begin{lemma}\label{l1}
For any two vertices $x,y\in V$, there exists a rainbow $k$-path joining $x$ and $y$ in $\mathbf{G}$ for $k\in\{3,n\}$, and $k=2$ if $d_{\mathbf{G}}(x,y)=1$.
\end{lemma}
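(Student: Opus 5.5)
The lemma has three parts, and I will handle them separately under the standing assumptions $n\ge5$ odd and $\delta(G_i)\ge\frac{n+1}{2}$ for every $i\in[n-1]$.

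\textbf{The case $k=n$.} This is exactly Theorem \ref{thm2-1}, which says $\mathbf{G}$ is rainbow Hamiltonian connected. So for any $x,y$ there is a rainbow Hamiltonian path joining them, that is, a rainbow $n$-path.

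\textbf{The case $k=2$.} By definition, $d_{\mathbf{G}}(x,y)=1$ means there is a rainbow path of length one between $x$ and $y$. That is, $xy\in E(G_i)$ for some $i$, and the single edge $xy$ coloured $i$ is the required rainbow $2$-path.

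\textbf{The case $k=3$.} I need distinct colours $i\neq j$ and a vertex $z\notin\{x,y\}$ with $xz\in E(G_i)$ and $zy\in E(G_j)$.
\begin{enumerate}[label=(\arabic*)]
\item Fix any two distinct indices $i,j\in[n-1]$; this is possible since $n-1\ge4$.
\item Count common neighbours. We have $|N_{G_i}(x)\setminus\{y\}|\ge\frac{n+1}{2}-1=\frac{n-1}{2}$, and likewise $|N_{G_j}(y)\setminus\{x\}|\ge\frac{n-1}{2}$.
\item Both sets lie in $V\setminus\{x,y\}$, which has $n-2$ elements.
\item Their sizes sum to at least $n-1>n-2$, so they intersect. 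Any $z$ in the intersection gives the rainbow path $xzy$ with $\phi(xz)=i$ and $\phi(zy)=j$.
\end{enumerate}

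The only step needing any care is this pigeonhole count: one must remove $y$ (respectively $x$) from the neighbourhoods before counting, so that $z\notin\{x,y\}$. The degree bound $\frac{n+1}{2}$ leaves exactly enough room for this. No earlier result is needed for $k=3$, and since Theorem \ref{thm2-1} does all the real work for $k=n$, I expect no genuine obstacle.
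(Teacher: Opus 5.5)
Your proposal is correct and follows the paper's proof essentially verbatim: Theorem~\ref{thm2-1} for $k=n$, the single edge for $k=2$, and a pigeonhole count showing $N_{G_i}(x)\cap N_{G_j}(y)\neq\emptyset$ for two distinct colours when $k=3$. Removing $y$ and $x$ before counting is harmless but not needed, since $x\notin N_{G_i}(x)$ and $y\notin N_{G_j}(y)$ already force any common neighbour to lie outside $\{x,y\}$.
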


\begin{proof}
For any two vertices $x,y\in V$, there exists a rainbow $2$-path joining $x$ and $y$ if $d_{\mathbf{G}}(x,y)=1$, and Theorem \ref{thm2-1} implies the existence of a rainbow $n$-path between $x$ and $y$ since $\delta(G_i)\geq\frac{n+1}{2}$ for each $i\in [n-1]$.

Since $\delta(G_i)\geq\frac{n+1}{2}$ for each $i\in [n-1]$, we have $N_{G_1}(x)\cap N_{G_2}(y)\neq \emptyset$, which implies that there exists a rainbow $3$-path joining $x$ and $y$.
\end{proof}

By Lemma \ref{l1}, to prove Theorem \ref{thm1-5}, it suffices to show that one of the following two statements holds:\\
$(\mathrm{R_1})$ For any $x$ and $y$, there exists a rainbow $k$-path joining $x$ and $y$ in $\mathbf{G}$ for every integer $k\in[4,n-1]$;\\
$(\mathrm{R_2})$ $\mathbf{G}=\mathbf{F_{n-1}}$.

\begin{lemma}\label{lem1}
If $n=5$, then $\mathbf{G}$ is rainbow panconnected.
\end{lemma}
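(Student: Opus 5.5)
The plan is to use the very rigid structure of the collection when $n=5$. There are four graphs $G_1,\dots,G_4$, and each has $\delta(G_i)\geq 3$ on $5$ vertices. So every vertex misses at most one other vertex in $G_i$. Equivalently, each $G_i$ is $K_5$ minus a matching, which has at most two edges.

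First, I will note that the exceptional collection $\mathbf{F_4}$ of Definition \ref{def1-2} does not exist for $n=5$. Indeed, $Q_2$ would have $3$ vertices with $\delta(Q_2)\geq 1$ and a single-edge component, which would force the third vertex to be isolated. Hence, by Lemma \ref{l1} and the reduction to $(\mathrm{R_1})$, it suffices to show the following: for any two vertices $x,y$ there is a rainbow $4$-path $x\,u\,w\,y$ with $\{u,w\}\subseteq V\setminus\{x,y\}=\{a,b,c\}$.

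The key observations, all consequences of the matching structure of the complements, are:
\begin{enumerate}[label=(\alph*), itemsep=2pt]
\item In every $G_i$, $x$ is adjacent to at least two of $a,b,c$, and likewise $y$.
\item In every $G_i$, at most one of the three edges $ab,bc,ca$ is missing, since a matching in a triangle has at most one edge.
\item Any missing edge inside $\{a,b,c\}$, say $ab$, prevents any other missing edge at $a$ or $b$ in that graph. Hence $xa,xb,ya,yb\in E(G_i)$ whenever $ab\notin E(G_i)$.
\end{enumerate}

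Main case: fix $G_1$ for the edge at $x$, $G_3$ for the edge at $y$, and $G_2$ or $G_4$ for the middle edge. Let $A=N_{G_1}(x)\cap\{a,b,c\}$ and $B=N_{G_3}(y)\cap\{a,b,c\}$; both have size at least $2$. There are then several ordered pairs $(u,w)\in A\times B$ with $u\neq w$. If $A\neq B$ or $|A|=3$, these pairs realize at least two distinct edges $uw$. By (b), $G_2$ misses at most one of them, so some pair works.

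Main obstacle: the degenerate situation $A=B=\{a,b\}$ with $ab\notin E(G_2)$. Here I first try $G_4$ for the middle edge. If also $ab\notin E(G_4)$, I permute the roles of the graphs. By (c), $G_2$ and $G_4$ both contain $xa,xb,ya,yb$, while by (b) $G_2$ and $G_4$ contain $ac$ and $bc$. Moreover $A=\{a,b\}$ means $xc\notin E(G_1)$, and then by (c) applied to $G_1$ we get $ab\in E(G_1)$; similarly $yc\notin E(G_3)$ gives $ab\in E(G_3)$. So I take the path $x\,a\,b\,y$ with $xa\in G_2$, $ab\in G_1$ and $by\in G_4$, which is rainbow.

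The remaining work is to check that every subcase of this short case analysis is covered. This is where I expect the only real care to be needed; the rest is immediate from the matching structure.
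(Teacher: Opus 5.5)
Your reduction to $k=4$, the observation that the non-edges of each $G_i$ form a matching, and the main case (when $A\neq B$ or $|A|=3$ there are at least two distinct candidate middle edges, and $G_2$ misses at most one of them) are all correct. The gap is in the last step of the degenerate case $A=B=\{a,b\}$ with $ab\notin E(G_2)\cup E(G_4)$. There you claim that $xc\notin E(G_1)$ together with (c) forces $ab\in E(G_1)$, and likewise $ab\in E(G_3)$. Observation (c) only says that a missing edge $ab$ excludes other missing edges \emph{at $a$ or $b$}. The missing edge $xc$ is disjoint from $ab$, so the non-edge matching of $G_1$ can be $\{xc,ab\}$.

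A concrete counterexample: take $G_1=K_5-\{xc,ab\}$, $G_3=K_5-\{yc,ab\}$ and $G_2=G_4=K_5-\{ab\}$. All four graphs have minimum degree $3$, and your sets are $A=B=\{a,b\}$. But $ab$ lies in none of the four graphs. So the path $x\,a\,b\,y$ you propose does not exist, and no reassignment of graphs to roles that keeps $ab$ as the middle edge can rescue it.

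The missing idea is to route the path through $c$ instead. Since $xc$ is a non-edge of $G_1$ and the non-edges form a matching, no other non-edge of $G_1$ meets $c$, so $cy\in E(G_1)$. Since $ab\notin E(G_2)$, (c) gives $xa\in E(G_2)$. Since $ab\notin E(G_4)$, (b) gives $ac\in E(G_4)$. Hence $x\,a\,c\,y$ with $xa\in E(G_2)$, $ac\in E(G_4)$ and $cy\in E(G_1)$ is a rainbow $4$-path, and this closes the degenerate case without needing $ab$ in any graph.

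With this replacement, your complement-matching argument is a valid alternative to the paper's proof. The paper instead starts from an edge $u_1u_2$ of $G_4-\{x,y\}$, takes $xu_1\in E(G_1)$, and branches on whether $u_2y\in E(G_2)\cup E(G_3)$.
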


\begin{proof}
By Lemma \ref{l1} and the definition of rainbow panconnected, we only need to show $(\mathrm{R_1})$ holds.

For every $i\in [4]$, we obtain $\delta(G_i)\geq\frac{n+1}{2}\geq 3$ from the assumption $n=5$. Then $\delta(G_i-\{x,y\})\geq 1$. Let $V\setminus\{x,y\}=\{u_1,u_2,u_3\}$. Without loss of generality, let $u_1u_2\in E(G_4)$. Since $d_{G_1}(x)\geq3$, we have $N_{G_1}(x)\cap \{u_1,u_2\}\neq \emptyset$, and we may assume $xu_1\in E(G_1)$.

If $u_2y\in E(G_2)\cup E(G_3)$, then $xu_1u_2y$ is a rainbow $4$-path, $(\mathrm{R_1})$ holds.

If $u_2y\notin E(G_2)\cup E(G_3)$, then $N_{G_2}(u_2)=N_{G_3}(u_2)=\{x,u_1,u_3\}$. Since $d_{G_4}(y)\geq3$, we have $N_{G_4}(y)\cap \{u_2,u_3\}\neq \emptyset$, which implies $xu_1u_2y$ or $xu_2u_3y$ is a rainbow $4$-path, $(\mathrm{R_1})$ holds.
\end{proof}

In the following, we consider $n\geq7$. Let $U=V\setminus\{x,y\}$.

If $ux\in E(G_i)$ for any $u\in U$ and any $i\in [n-1]$, then $\mathbf{G}$ has a rainbow $k$-path joining $x$ and $y$ for $k\in[4,n-1]$ since there exists a rainbow Hamiltonian path joining $x$ and $y$ by Theorem \ref{thm2-1}, and thus $(\mathrm{R_1})$ holds.

Otherwise, there exists $u\in U$ such that $ux\notin E(G_i)$ for some $i\in [n-1]$. Without loss of generality, we assume $zx\notin E(G_{n-1})$ with $z\in U$.
Let $H_i=G_i-\{x,y,z\}$ for each $i\in[n-2]$ and $\mathbf{H}=\{H_1,\dots,H_{n-2}\}$. Let $\mathbf{H_1},\dots,\mathbf{H_{n-2}}$ be $n-2$ graph collections with $\mathbf{H_j}=\mathbf{H}\setminus\{H_j\}$ for $j\in[n-2]$. Now we study $\mathbf{H_1},\dots,\mathbf{H_{n-2}}$ by Lemmas \ref{lem2}-\ref{lem8}.

\begin{lemma}\label{lem2}
If there exists some $j\in[n-2]$ such that $\mathbf{H_j}$ contains a rainbow Hamiltonian cycle, then $\mathbf{G}$ is rainbow panconnected.
\end{lemma}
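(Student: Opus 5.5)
The plan is to lengthen a segment of the rainbow Hamiltonian cycle of $\mathbf{H_j}$ into an $x$--$y$ path, using the two colours the cycle leaves free. By Lemma \ref{l1}, and since $x,y$ were arbitrary, it suffices to produce a rainbow $k$-path joining $x$ and $y$ for every $k\in[4,n-1]$, i.e.\ to verify $(\mathrm{R_1})$ for this pair.

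\textbf{Setup.} Let $C$ be a rainbow Hamiltonian cycle of $\mathbf{H_j}$. Its vertex set is $U\setminus\{z\}$, so $|C|=m:=n-3$, which is even because $n$ is odd. The collection $\mathbf{H_j}$ has exactly $n-3$ graphs, so the $m$ edges of $C$ use every colour in $[n-2]\setminus\{j\}$, each exactly once. The colours $j$ and $n-1$ remain unused.

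\textbf{Construction.} Fix an orientation of $C$ and write $u^{+\ell}$ for the vertex $\ell$ steps after $u$. For $k\in[4,n-1]$ put $\ell=k-3\in[1,m-1]$. If $u\in N_{G_{n-1}}(x,C)$ and $u^{+\ell}\in N_{G_j}(y,C)$, then
\[
x\,u\,C\,u^{+\ell}\,y
\]
is a path. Its vertices are distinct because $\ell\le m-1$, and it has $\ell+3=k$ vertices. It uses $\ell$ edges of $C$ with distinct colours from $[n-2]\setminus\{j\}$, together with $xu\in E(G_{n-1})$ and $u^{+\ell}y\in E(G_j)$, so it is rainbow. The case $k=n-1$ corresponds to $\ell=m-1$: the path runs through all of $C$ and avoids only $z$.

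\textbf{Counting.} Let $A=N_{G_{n-1}}(x,C)$ and $B=N_{G_j}(y,C)$. It remains to show $(A+\ell)\cap B\neq\emptyset$ for every $\ell$, where $A+\ell=\{u^{+\ell}:u\in A\}$. The key point is the choice of $z$: since $zx\notin E(G_{n-1})$, the only vertex outside $C$ that can be a neighbour of $x$ in $G_{n-1}$ is $y$. Hence
\[
|A|\ge \frac{n+1}{2}-1=\frac{n-1}{2}=\frac{m}{2}+1 .
\]
For $y$ in $G_j$ we lose at most $x$ and $z$, so
\[
|B|\ge \frac{n+1}{2}-2=\frac{m}{2}.
\]
Therefore $|A+\ell|+|B|\ge m+1>|V(C)|$, and the two sets must intersect for every shift $\ell$, which gives the required $u$.

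\textbf{Main obstacle.} The step I expected to be delicate is this counting. Without the extra neighbour that $zx\notin E(G_{n-1})$ provides, both sets could have size exactly $m/2$ and be complementary after shifting, and one would have to fall back on reversing the orientation of $C$ or swapping the roles of the colours $j$ and $n-1$. Assigning colour $n-1$ to the edge at $x$ avoids all of this, so I expect the proof to be short once this is arranged.
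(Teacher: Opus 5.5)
Your proposal is correct and takes essentially the same approach as the paper. The paper also sets $j=n-2$, puts colour $n-1$ on the edge at $x$ (gaining the extra neighbour from $xz\notin E(G_{n-1})$) and colour $j$ on the edge at $y$, and counts $|I_k|+|I_0|\ge \frac{n-1}{2}+\frac{n-3}{2}=n-2>n-3$ to force a shifted intersection. It then takes the cycle segment $xu_{s+k-3}C^{-}u_sy$, which is your forward segment $x\,u\,C\,u^{+\ell}\,y$ read in reverse.
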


\begin{proof}
Without loss of generality, let $j=n-2$ and $C=u_1u_2\cdots u_{n-3}u_1$ be a rainbow Hamiltonian cycle in $\mathbf{H_{n-2}}$ with $u_iu_{i+1}\in E(G_i)$ for each $i\in [n-3]$, where the subscripts are taken modulo $n-3$. Then we consider the following sets for any $k\in[4,n-1]$:
\[
I_k=\{i\in[n-3]:xu_{i+k-3}\in E(G_{n-1})\}, \quad I_0=\{i\in[n-3]:yu_i\in E(G_{n-2})\}.
\]
Since $xz\notin E(G_{n-1})$ and $\delta(G_i)\geq\frac{n+1}{2}$ for each $i\in [n-1]$, we have $|I_{k}|=|N_{G_{n-1}}(x)\cap V(C)|=d_{G_{n-1}}(x,C)\geq d_{G_{n-1}}(x)-1\geq\frac{n-1}{2}$ and $|I_{0}|=d_{G_{n-2}}(y,C)\geq d_{G_{n-2}}(y)-2\geq\frac{n-3}{2}$, and thus $|I_{k}|+|I_{0}|\geq n-2$. In addition, $|I_{k}\cup I_{0}|\leq n-3$, which implies $I_{k}\cap I_{0}\neq\emptyset$. Let $s\in I_{k}\cap I_{0}$. Then $xu_{s+k-3}C^{-}u_sy$ is the a rainbow $k$-path joining $x$ and $y$, and thus $(\mathrm{R_1})$ holds, which implies $\mathbf{G}$ is rainbow panconnected.
\end{proof}

Now, we assume that $\mathbf{H_j}$ contains no rainbow Hamiltonian cycles for any $j\in[n-2]$.

\begin{lemma}\label{lem3}
If $\mathbf{H_j}$ contains a rainbow $(n-4)$-cycle for some $j\in[n-2]$, then $\mathbf{G}$ is rainbow panconnected.
\end{lemma}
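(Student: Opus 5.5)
We would follow the proof of Lemma \ref{lem2}, now with one vertex of $U\setminus\{z\}$ left off the cycle. By symmetry take $j=n-2$. Let $C=u_1u_2\cdots u_{n-4}u_1$ be a rainbow $(n-4)$-cycle in $\mathbf{H_{n-2}}$, and let $w$ be the unique vertex of $V(H_{n-2})\setminus V(C)$. Then $C$ uses $n-4$ of the $n-3$ colours in $[n-3]$; relabel so that $u_iu_{i+1}\in E(G_i)$ for $i\in[n-4]$. This leaves three colours unused: $n-3$ (spare), $n-2$ and $n-1$.

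For $k\in[4,n-2]$ we would use paths of the form $xu_{s+k-3}C^{-}u_sy$. Such a path contains $k-2\le n-4$ consecutive vertices of $C$, with edge $xu_{s+k-3}$ in colour $n-1$ and edge $u_sy$ in colour $n-2$. As in Lemma \ref{lem2}, set
\[I_k=\{i\in[n-4]: xu_{i+k-3}\in E(G_{n-1})\},\qquad I_0=\{i\in[n-4]: yu_i\in E(G_{n-2})\}.\]
Since $xz\notin E(G_{n-1})$, we have $|I_k|\ge \frac{n+1}{2}-2=\frac{n-3}{2}$, the loss of two coming from $y$ and $w$. Since $y$ may be adjacent to $x,z,w$ in $G_{n-2}$, we only get $|I_0|\ge\frac{n+1}{2}-3=\frac{n-5}{2}$. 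These bounds sum to $n-4=|C|$, which is one short of forcing $I_k\cap I_0\neq\emptyset$.

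The next step is to exploit the spare colour $n-3$. If $I_k\cap I_0=\emptyset$ for some $k$, then all the counts are tight. In particular $xw,xy\in E(G_{n-1})$ with $d_{G_{n-1}}(x)=\frac{n+1}{2}$, and $y$ is adjacent to $x,z,w$ in $G_{n-2}$ with $d_{G_{n-2}}(y)=\frac{n+1}{2}$. Since colour $n-3$ is interchangeable with $n-2$ on the $y$-side, the same tightness and complementarity hold with $G_{n-3}$ in place of $G_{n-2}$. We would then try to route through $w$ using the forced edges $xw\in E(G_{n-1})$ and $yw\in E(G_{n-2})$. Concretely, we look for a path $xu_{t}C^{-}u_{t'}wy$, where $u_{t'}w$ takes colour $n-3$, or symmetrically a path $xwu_tC^{-}u_{t'}y$. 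This gives new index sets that must intersect. If they still do not, the tightness propagates to $w$, so $N_{G_{n-3}}(w)\cap V(C)$ is rigidly determined. At that point we would use the freedom to rotate which colour of $C$ is the spare one: any cycle colour can be swapped with colour $n-3$ whenever the corresponding edge also lies in $G_{n-3}$. The aim is a contradiction with $\delta\ge\frac{n+1}{2}$, or else the bipartite $\mathbf{F}$-type structure; the latter is incompatible with having a rainbow $(n-4)$-cycle together with the absence of rainbow Hamiltonian cycles in every $\mathbf{H_j}$.

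The case $k=n-1$ must be treated separately, because the path has to contain $n-3$ of the $n-2$ vertices of $U$, so $w$ or $z$ has to be inserted. We would look for a path $xwu_tC^{-}u_{t+1}y$, where the whole cycle is traversed, $xw$ has colour $n-1$, $wu_t$ has colour $n-3$, and $u_{t+1}y$ has colour $n-2$. This again reduces to intersecting two index sets of combined size about $n-4$, with the same tight-case refinement as above. If it fails, we would use the analogous path through $z$ ending $\cdots u_{t}zy$.

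The main obstacle is the tight case, where all degree bounds are attained simultaneously. There the simple counting gives exactly $|C|$ and no slack, so we must extract structure: either from the fixed adjacencies of $x,y,w,z$, or from recolouring the cycle with the spare colour. We would first try to show that tightness forces $N_{G_{n-1}}(x)\cap V(C)$ and $N_{G_{n-2}}(y)\cap V(C)$ to be complementary "shifted" sets. Combining this with the same statement for $G_{n-3}$, and with the edges at $w$, should then give an immediate rainbow path of the required length.
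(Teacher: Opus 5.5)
Your proposal has a genuine gap. Everything beyond the one-short count is stated as intention ("we would try", "should then give"). The tight case, which you correctly identify as the whole difficulty, is never resolved, and the tools you name do not close it.

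Routing through $w$ with paths $xu_tC^{-}u_{t'}wy$ only produces another pair of index sets whose sizes again sum to exactly $|C|=n-4$. These are the $G_{n-1}$-neighbours of $x$ on $C$, at least $\frac{n-3}{2}$ of them, and the $G_{n-3}$-neighbours of $w$ on $C$, at least $\frac{n-5}{2}$ since $w$ may be adjacent to $x,y,z$. So "tightness propagates to $w$" yields one more complementary-shift relation, not a contradiction or a path. The "rotate the spare colour" step is never made precise. The appeal to an $\mathbf{F}$-type structure is beside the point, since this lemma has no exceptional case. For $k=n-1$, your path $xwu_tC^{-}u_{t+1}y$ needs $xw\in E(G_{n-1})$, which you only know in the tight case of some other $k$.

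The missing idea is to apply the standing assumption that no $\mathbf{H_s}$ has a rainbow Hamiltonian cycle (the negation of Lemma \ref{lem2}) directly to $w$, before looking at $x$ or $y$. If $wu_{s+1}\in E(G_{n-3})$ and $wu_s\in E(G_{n-2})$, then $wu_sC^{-}u_{s+1}w$ uses exactly the colours $[n-2]\setminus\{s\}$. It is therefore a rainbow Hamiltonian cycle in $\mathbf{H_s}$, which is excluded.

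So set $A=\{s: wu_{s+1}\in E(G_{n-3})\}$ and $B=\{s: wu_s\in E(G_{n-2})\}$. Then $A\cap B=\emptyset$ and $|A|,|B|\ge\frac{n-5}{2}$. Reusing the same cycle trick, $|A|=|B|=\frac{n-5}{2}$, neither set contains consecutive integers mod $n-4$, and the unique missing index $s$ has $s-1\in B$ and $s+1\in A$.

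This forces $N_{G_{n-3}}(w)=N_{G_{n-2}}(w)=\{x,y,z\}\cup U_1$. Here $U_1$ consists of every other vertex of $C$, so the two consecutive vertices $u_{n-5},u_{n-4}$ are both non-neighbours. In particular $wy\in E(G_{n-2})$ comes for free, and the neighbourhood of $y$ on $C$ is essentially never needed.

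Every $k\in[5,n-1]$ is then realised by a path $xu_pC^{\pm}u_qwy$ with colours $n-1$, cycle colours, $n-3$, $n-2$. You split on whether $x$ is $G_{n-1}$-adjacent to $u_{n-5}$ or $u_{n-4}$, and use parity along the alternating cycle to land on some $u_q\in U_1$ at the required distance. The case $k=4$ needs a short extra argument via $N_{G_{n-3}}(y)$. Without this rigidity of $N(w)$, your counting does not get past the tight case.
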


\begin{proof}
Without loss of generality, assume that $j=n-2$ and $C=u_1u_2\cdots u_{n-4}u_1$ is a rainbow $(n-4)$-cycle in $\mathbf{H_{n-2}}$ with $u_iu_{i+1}\in E(G_i)$ for each $i\in [n-4]$, where the subscripts are taken modulo $n-4$.

Let $\{w\}=V\setminus(V(C)\cup\{x,y,z\})$. We define the following two sets:
\[
A=\{s\in[n-4]:wu_{s+1}\in E(G_{n-3})\}, \quad B=\{s\in[n-4]:wu_s\in E(G_{n-2})\}.
\]
Then $|A|\geq\frac{n-5}{2}$ and $|B|\geq\frac{n-5}{2}$ since $d_{G_i}(w,C)\geq\frac{n+1}{2}-3=\frac{n-5}{2}$ for each $i\in\{n-3,n-2\}$.

If $A\cap B\neq \emptyset$, we assume $s\in A\cap B$, then $wu_sC^{-}u_{s+1}w$ is a rainbow Hamiltonian cycle in $\mathbf{H_s}$, a contradiction. So $A\cap B=\emptyset$, and
\begin{equation}\tag{$1$}\label{e1}
n-5=\frac{n-5}{2}+\frac{n-5}{2} \leq|A|+|B|=|A\cup B|\leq n-4=|V(C)|.
\end{equation}

\begin{claimrom}\label{claim1}
$|A|=|B|=\frac{n-5}{2}$.
\end{claimrom}

\begin{proof}
If not, one of $A$ and $B$ has size $\frac{n-3}{2}$ and the other has size $\frac{n-5}{2}$ by \eqref{e1}. We assume $|A|=\frac{n-3}{2}$ and $|B|=\frac{n-5}{2}$. The proof for the case $|A|=\frac{n-5}{2}$ and $|B|=\frac{n-3}{2}$ is analogous, so we omit it.

Since $|A|+|B|=|V(C)|$, $|A|=|B|+1$ and $A\cap B=\emptyset$, we conclude that $A$ must contain at least two consecutive integers (all integers mentioned herein are taken modulo $n-4$). Choose a maximal consecutive subset $\{s,s+1,\dots,s+t\}\subseteq A$ such that $s-1,s+t+1\notin A$, where $t\geq1$.
Then $s-1,s+t+1\in B$ since $|A\cup B|=|V(C)|$ and $A\cap B=\emptyset$, which implies that $wu_{s+t}\in E(G_{n-3})$ and $wu_{s+t+1}\in E(G_{n-2})$. Thus $wu_{s+t}C^{-}u_{s+t+1}w$ is a rainbow Hamiltonian cycle in $\mathbf{H_{s+t}}$, a contradiction.

This completes the proof of Claim \ref{claim1}.
\end{proof}

By Claim \ref{claim1}, we have $|A\cup B|=n-5$ and $\{x,y,z\}\subseteq N_{G_i}(w)$ for each $i\in\{n-3,n-2\}$ since $\delta(G_i)\geq\frac{n+1}{2}$.

\begin{claimrom}\label{claim2}
$A,B$ contain no consecutive integers modulo $n-4$.
\end{claimrom}

\begin{proof}
We prove that $A$ contains no consecutive integers modulo $n-4$, and the case for $B$ is analogous, which we omit. All integers involved in this proof are considered modulo $n-4$.

To the contrary, $A$ contains consecutive integers, and assume that $\{a,a+1,\dots,a+m\}\subseteq A$ is a maximal consecutive subset of $A$ such that $a-1,a+m+1\notin A$, where $m\geq1$.

Firstly, we show $A\cup B=[n-4]\setminus\{a+m+1\}$. If $a+m+1\in B$, then $wu_{a+m+1}\in E(G_{n-2})$, and $wu_{a+m}C^{-}u_{a+m+1}w$ is a rainbow Hamiltonian cycle in $\mathbf{H_{a+m}}$, a contradiction. Therefore, $a+m+1\notin A\cup B$, and $A\cup B=[n-4]\setminus\{a+m+1\}$ since $|A\cup B|=n-5$ and $A\cap B=\emptyset$. This implies that $a-1\in B$ since $a-1\notin A$.

Similar to the above proof, $A$ contains no other consecutive subset besides $\{a,a+1,\dots,a+m\}$. Otherwise, there exists $t\in[n-4]$ with $t\neq a+m+1$ such that $t\notin A\cup B$, meaning that $|A|+|B|\le n-6$, a contradiction to \eqref{e1}.

Secondly, we show $a+m+2\in A$. If $a+m+2\in B$, by similar analysis and $wu_{a+m+1}\in E(G_{n-3})$, we have $wu_{a+m+1}C^{-}u_{a+m+2}w$ is a rainbow Hamiltonian cycle in $\mathbf{H_{a+m+1}}$, a contradiction. Thus $a+m+2\in A$ since $A\cup B=[n-4]\setminus\{a+m+1\}$.

Thirdly, $B$ contains no consecutive integers. Otherwise, let $\{b,b+1,\dots,b+m'\}\subseteq B$ be a maximal consecutive subset of $B$, where $m'\geq1$. Then $b-1,b+m'+1\notin B$. Clearly, $b-1\neq a+m+1$ and $b+m'+1\neq a+m+1$ since $\{a+m,a+m+2\}\subseteq A$ while $\{b,b+m'\}\subseteq B$. Then $b-1,b+m'+1\in A$ since $A\cup B=[n-4]\setminus\{a+m+1\}$, which implies $wu_{b}C^{-}u_{b+1}w$ is a rainbow Hamiltonian cycle in $\mathbf{H_b}$, a contradiction.

Finally, combining the above arguments, $|A\setminus\{a,a+1,\dots,a+m\}|=|B|$, and thus $|A|-|B|=m+1\geq2$, contradicting $|A|=|B|$.
\end{proof}

\begin{claimrom}\label{claim3}
If $[n-4]\setminus(A\cup B)=\{s\}$, then $s-1\in B$ and $s+1\in A$.
\end{claimrom}

\begin{proof}
If $s+1\in B$, then $s-1\in A$ by Claim \ref{claim2}. Thus $wu_{s}\in E(G_{n-3})$ and $wu_{s+1}\in E(G_{n-2})$, which implies $wu_{s}C^{-}u_{s+1}w$ is a rainbow Hamiltonian cycle in $\mathbf{H_{s}}$, a contradiction.
Therefore, $s+1\in A$, and thus $s-1\in B$ by Claim \ref{claim2}.
\end{proof}

From the definitions of $A$, $B$, Claim \ref{claim3}, and the fact that $\{x,y,z\}\subseteq N_{G_i}(w)$ for each $i\in\{n-3,n-2\}$, we have $N_{G_{n-3}}(w)=N_{G_{n-2}}(w)$.
By Claim \ref{claim2}, without loss of generality, we can assume $N_{G_{n-3}}(w,C)=N_{G_{n-2}}(w,C)=U_1=\{u_1,u_3,\dots,u_{n-6}\}$, and $U_2=V(C)\setminus U_1=\{u_2,u_4,\dots,u_{n-5},u_{n-4}\}$. Now we prove that there exists a rainbow $k$-path joining $x$ and $y$ in $\mathbf{G}$ for every integer $k\in[4,n-1]$ by the following cases.

\vspace{6pt}
$\mathbf{Case~1.}$ $k=4$.
\vspace{6pt}

If there exists $u_i\in N_{G_{n-1}}(x)\cap U_1$, then $xu_iwy$ is the desired rainbow $4$-path with $xu_i\in E(G_{n-1})$, $u_iw\in E(G_{n-3})$ and $wy\in E(G_{n-2})$.

If $N_{G_{n-1}}(x)\cap U_1=\emptyset$, then $N_{G_{n-1}}(x)\subseteq U_2\cup\{y,w\}$ since $xz\notin E(G_{n-1})$. Moreover, since $d_{G_{n-1}}(x)\geq \frac{n+1}{2}$ and $|U_2\cup\{y,w\}|=\frac{n-3}{2}+2=\frac{n+1}{2}$, we have $N_{G_{n-1}}(x)=U_2\cup\{y,w\}$. Now we show that there exists a rainbow $4$-path by the following cases.

\begin{itemize}
\item There exists $u_i\in U_1$ such that $u_i\in N_{G_{n-3}}(y)$. Then $xu_{i+1}u_iy$ is the desired rainbow $4$-path with $xu_{i+1}\in E(G_{n-1})$, $u_{i+1}u_i\in E(G_i)$ and $u_iy\in E(G_{n-3})$.

\item $z\in N_{G_{n-3}}(y)$. Then $xwzy$ is the desired rainbow $4$-path with $xw\in E(G_{n-1})$, $wz\in E(G_{n-2})$ and $yz\in E(G_{n-3})$.

\item $N_{G_{n-3}}(y)\cap (U_1\cup \{z\})=\emptyset$. Then $N_{G_{n-3}}(y)\subseteq U_2\cup\{x,w\}$, and thus $N_{G_{n-3}}(y)=U_2\cup\{x,w\}$ since $d_{G_{n-3}}(y)\geq \frac{n+1}{2}$ and $|U_2\cup\{x,w\}|=\frac{n+1}{2}$. It follows that $xu_{n-4}u_{n-5}y$ is the desired rainbow $4$-path with $xu_{n-4}\in E(G_{n-1})$, $u_{n-5}u_{n-4}\in E(G_{n-5})$ and $u_{n-5}y\in E(G_{n-3})$.
\end{itemize}

$\mathbf{Case~2.}$ $k\in[5,n-1]$ and $\{u_{n-5},u_{n-4}\}\cap N_{G_{n-1}}(x)\neq\emptyset$.
\vspace{6pt}

Now we show $\mathbf{G}$ contains a rainbow $k$-path joining $x$ and $y$ if $u_{n-4}\in N_{G_{n-1}}(x)$, and the case for $u_{n-5}\in N_{G_{n-1}}(x)$ is similar and omitted.

Note that $u_1Cu_{n-5}$ is an alternating path of $U_1$ and $U_2$, i.e., every vertex on the path alternates between $U_1$ and $U_2$. Since $u_1\in U_1$ and $u_{n-5}\in U_2$, the two vertices at the same distance from $u_{n-4}$ on $u_{n-4}Cu_{n-5}$ and on $u_{n-4}C^-u_1$ belong to different sets $U_1$ and $U_2$. For example, the distance from $u_{n-4}$ to $u_{k-4}$ along $u_{n-4}Cu_{n-5}$ is $k-4$, and the distance to $u_{n-k}$ along $u_{n-4}C^-u_1$ is also $k-4$. Here either $u_{k-4}\in U_1$ and $u_{n-k}\in U_2$, or $u_{k-4}\in U_2$ and $u_{n-k}\in U_1$. Thus $\{u_{k-4},u_{n-k}\}\cap U_1\neq\emptyset$, it follows that $xu_{n-4}Cu_{k-4}wy$ is the desired rainbow $k$-path with $xu_{n-4}\in E(G_{n-1})$, $u_{k-4}w\in E(G_{n-3})$ and $wy\in E(G_{n-2})$ if $u_{k-4}\in U_1$, and $xu_{n-4}C^-u_{n-k}wy$ is the desired rainbow $k$-path with $xu_{n-4}\in E(G_{n-1})$, $u_{n-k}w\in E(G_{n-3})$ and $wy\in E(G_{n-2})$ if $u_{n-k}\in U_1$.

$\mathbf{Case~3.}$ $k\in[5,n-1]$ and $\{u_{n-4},u_{n-5}\}\cap N_{G_{n-1}}(x)=\emptyset$.
\vspace{6pt}

Now we show $\mathbf{G}$ contains a rainbow $k$-path joining $x$ and $y$.

Since $d_{G_{n-1}}(x)\geq \frac{n+1}{2}$ and $xz\notin E(G_{n-1})$, we obtain $d_{G_{n-1}}(x,C)\geq \frac{n-3}{2}$. Thus $N_{G_{n-1}}(x)\cap U_1\neq\emptyset$ and $N_{G_{n-1}}(x)\cap (U_2\setminus\{u_{n-4},u_{n-5}\})\neq\emptyset$ since $|U_1|=\frac{n-5}{2}$ and $|U_2\setminus\{u_{n-4},u_{n-5}\}|=\frac{n-7}{2}$.
Take $u_p \in N_{G_{n-1}}(x) \cap U_1$ and $u_q \in N_{G_{n-1}}(x) \cap (U_2 \setminus \{u_{n-4}, u_{n-5}\})$. Note that $|u_{\frac{n-5}{2}}Cu_{n-5}|=|u_{\frac{n-5}{2}}C^-u_{n-4}|=\frac{n-3}{2}$. Now we divide into four subcases based on the parity of $k-3$ and whether $u_{\frac{n-5}{2}}$ belongs to $U_1$ or $U_2$.

$\mathbf{Subcase~3.1.}$ $k-3$ is even and $u_{\frac{n-5}{2}}\in U_1$.

Clearly, $\frac{n-5}{2}$ is odd, then $k-3\in\left[2,\frac{n-3}{2}\right]\cup \left[\frac{n+1}{2},n-5\right]$ since $n$ is odd and $k-3$ is even.

\begin{itemize}
    \item $k-3\in\left[2,\frac{n-3}{2}\right]$. Then $\max\{|u_qCu_{n-5}|,|u_qC^-u_{n-4}|\}>\frac{n-3}{2}$ since $u_q\in U_2\setminus \{u_{n-4},$ $u_{n-5}\}$. Without loss of generality, we assume $|u_qCu_{n-5}|>\frac{n-3}{2}$. This means $u_qCu_{q+k-4}$ is an alternating path of $U_1$ and $U_2$. Since $|u_qCu_{q+k-4}|=k-3$ is even, $u_{q+k-4}\in U_1$, and thus $xu_qCu_{q+k-4}wy$ is the desired rainbow $k$-path with $xu_q\in E(G_{n-1})$, $u_{q+k-4}w\in E(G_{n-3})$ and $wy\in E(G_{n-2})$.

    \item $k-3\in\left[\frac{n+1}{2},n-5\right]$. Then $\min\{|u_pCu_{n-5}|,|u_pC^-u_{n-4}|\}\leq \frac{n-3}{2}$ since $u_p\in U_1$. Without loss of generality, we assume $|u_pCu_{n-5}|\leq \frac{n-3}{2}$. This means $\{u_{n-5},u_{n-4}\}\subseteq V(u_pCu_{p+k-4})$ and $p+k-4\neq n-4$. Moreover, since $|u_pCu_{p+k-4}|=k-3$ is even, we have $u_{p+k-4}\in U_1$ and thus $xu_pCu_{p+k-4}wy$ is the desired rainbow $k$-path.
\end{itemize}

$\mathbf{Subcase~3.2.}$ $k-3$ is even and $u_{\frac{n-5}{2}}\in U_2$.

Note that $\frac{n-5}{2}$ is even, then $k-3\in\left[2,\frac{n-5}{2}\right]\cup \left[\frac{n-1}{2},n-5\right]$.

\begin{itemize}
    \item $k-3\in\left[2,\frac{n-5}{2}\right]$. Then $\max\{|u_qCu_{n-5}|,|u_qC^-u_{n-4}|\}\geq\frac{n-3}{2}$ since $u_q\in U_2\setminus \{u_{n-4},$ $ u_{n-5}\}$. Without loss of generality, we assume $|u_qCu_{n-5}|\geq\frac{n-3}{2}$. Then $u_{q+k-4}\in V(u_qCu_{n-5})$ and $q+k-4\neq n-5$. Since $|u_qCu_{q+k-4}|=k-3$ is even, $u_{q+k-4}\in U_1$, and thus $xu_qCu_{q+k-4}wy$ is the desired rainbow $k$-path.

    \item $k-3\in\left[\frac{n-1}{2},n-5\right]$. Then $\min\{|u_pCu_{n-5}|,|u_pC^-u_{n-4}|\}< \frac{n-3}{2}$ since $u_p\in U_1$. Without loss of generality, we assume $|u_pCu_{n-5}|< \frac{n-3}{2}$. Then $\{u_{n-5},u_{n-4}\}\subseteq V(u_pCu_{p+k-4})$ and $p+k-4\neq n-4$. Since $|u_pCu_{p+k-4}|=k-3$ is even, we have $u_{p+k-4}\in U_1$ and thus $xu_pCu_{p+k-4}wy$ is the desired rainbow $k$-path.
\end{itemize}

$\mathbf{Subcase~3.3.}$ $k-3$ is odd and $u_{\frac{n-5}{2}}\in U_1$.

Clearly, $\frac{n-5}{2}$ is odd, then $k-3\in\left[3,\frac{n-5}{2}\right]\cup \left[\frac{n-1}{2},n-4\right]$.

\begin{itemize}
    \item $k-3\in\left[3,\frac{n-5}{2}\right]$. Then $\max\{|u_pCu_{n-5}|,|u_pC^-u_{n-4}|\}\geq\frac{n-3}{2}$. Without loss of generality, we assume $|u_pCu_{n-5}|\geq\frac{n-3}{2}$. Then $u_{p+k-4}\in U_1$, and thus $xu_pCu_{p+k-4}wy$ is the desired rainbow $k$-path.

    \item $k-3\in\left[\frac{n-1}{2},n-4\right]$. Then $\min\{|u_qCu_{n-5}|,|u_qC^-u_{n-4}|\}< \frac{n-3}{2}$. Without loss of generality, we assume $|u_qCu_{n-5}|< \frac{n-3}{2}$. Then $u_{q+k-4}\in U_1$ and thus $xu_qCu_{q+k-4}wy$ is the desired rainbow $k$-path.
\end{itemize}

$\mathbf{Subcase~3.4.}$ $k-3$ is odd and $u_{\frac{n-5}{2}}\in U_2$.

Note that $\frac{n-5}{2}$ is even, then $k-3\in\left[3,\frac{n-3}{2}\right]\cup \left[\frac{n+1}{2},n-4\right]$.

\begin{itemize}
    \item $k-3\in\left[3,\frac{n-3}{2}\right]$. Then $\max\{|u_pCu_{n-5}|,|u_pC^-u_{n-4}|\}>\frac{n-3}{2}$. Without loss of generality, we assume $|u_pCu_{n-5}|>\frac{n-3}{2}$.
        Then $u_{p+k-4}\in U_1$, and thus $xu_pCu_{p+k-4}wy$ is the desired rainbow $k$-path.

    \item $k-3\in\left[\frac{n+1}{2},n-4\right]$. Then $\min\{|u_qCu_{n-5}|,|u_qC^-u_{n-4}|\}\leq \frac{n-3}{2}$. Without loss of generality, we assume $|u_qCu_{n-5}|\leq \frac{n-3}{2}$. Then $u_{q+k-4}\in U_1$, and thus $xu_qCu_{q+k-4}wy$ is the desired rainbow $k$-path.
\end{itemize}

Combining the above arguments, $(\mathrm{R_1})$ holds, thus $\mathbf{G}$ is rainbow panconnected and Lemma \ref{lem3} holds.
\end{proof}

Based Lemmas \ref{lem2} and \ref{lem3}, we assume that $\mathbf{H_j}$ contains no rainbow Hamiltonian cycles or rainbow $(n-4)$-cycles for any $j\in[n-2]$ in the following. Thus $\mathbf{H}$ contains neither a rainbow $(n-3)$-cycle nor a rainbow $(n-4)$-cycle.

\begin{remark}\label{re1}
Recall that for each $j\in[n-2]$, $\delta(H_j)\geq\frac{n+1}{2}-3=\frac{n-5}{2}$ and
$\mathbf{H_j}=\mathbf{H}\setminus\{H_j\}$ is a collection of not necessarily distinct $(n-3)$-vertex graphs with the same vertex set $V\setminus\{x,y,z\}$.
By Corollary \ref{cor2-3}, one of the following three statements holds:
\begin{enumerate}
\item[\textup{(1)}] $\mathbf{H_j}$ has a rainbow Hamiltonian path;

\item[\textup{(2)}] $n-3$ is even and $\mathbf{H_j}$ consists of $n-3$ copies of $K_{\frac{n-3}{2}}\cup K_{\frac{n-3}{2}}$;

\item[\textup{(3)}] $n-3$ is even and there is a partition $(F,I)$ of $V\setminus\{x,y,z\}$ with $|F|=\frac{n-5}{2}$ and $|I|=\frac{n-1}{2}$. For each $i\in [n-2]\setminus\{j\}$, $H_i=H_i[F]\vee H_i[I]$, where $H_i[I]$ is an independent set and $H_i[F]$ is an arbitrary graph.
\end{enumerate}
\end{remark}

Next, Lemmas \ref{lem6}-\ref{lem8} study the above cases (1)-(3), respectively. To prove Lemma \ref{lem6}, we first present the following Lemma \ref{lem5}.

\begin{lemma}\label{lem5}
Let $P^*$ be a rainbow Hamiltonian path in $\mathbf{H}$ with endpoints $w_1$ and $w_2$, and $\phi\colon E(P^*)\to [n-2]$ be an injective and $\{f_1,f_2\}=[n-2]\setminus im(\phi)$, where $im(\phi)$ is the image of $\phi$. Then $n-5\leq d_{G_{f_1}}(w_1,P^*)+d_{G_{f_2}}(w_2,P^*)\leq n-4$. Furthermore, $\frac{n-5}{2}\leq d_{G_{f_1}}(w_1,P^*),d_{G_{f_2}}(w_2,P^*)\leq \frac{n-3}{2}$.
\end{lemma}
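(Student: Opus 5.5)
The lower bound is a direct degree count; the upper bound comes from the standard rotation (crossing-edge) argument, which would produce a forbidden rainbow cycle.

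\emph{Setup and lower bound.} Write $P^*=v_1v_2\cdots v_{n-3}$ with $v_1=w_1$, $v_{n-3}=w_2$, and note that $V(P^*)=V\setminus\{x,y,z\}$ has $n-3$ vertices. In $G_{f_1}$, $w_1$ has at least $\frac{n+1}{2}$ neighbours, at most three of which lie in $\{x,y,z\}$. Hence $d_{G_{f_1}}(w_1,P^*)\ge\frac{n+1}{2}-3=\frac{n-5}{2}$, and the same holds for $w_2$ in $G_{f_2}$. Summing gives $d_{G_{f_1}}(w_1,P^*)+d_{G_{f_2}}(w_2,P^*)\ge n-5$.

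\emph{Upper bound via crossing edges.} Define
\[
S_1=\{i\in[n-4]: w_1v_{i+1}\in E(G_{f_1})\},\qquad S_2=\{i\in[n-4]: w_2v_i\in E(G_{f_2})\}.
\]
Then $|S_1|=d_{G_{f_1}}(w_1,P^*)$, since $w_1$ is adjacent only to $v_2,\dots,v_{n-3}$ on the path, and $|S_2|=d_{G_{f_2}}(w_2,P^*)$.

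Suppose some $i\in S_1\cap S_2$. The cycle
\[
w_1v_{i+1}P^*w_2v_iP^{*-}w_1
\]
uses every edge of $P^*$ except $v_iv_{i+1}$, together with $w_1v_{i+1}$ coloured $f_1$ and $w_2v_i$ coloured $f_2$. It is a rainbow Hamiltonian cycle of $H[V(P^*)]$ using $n-3$ distinct colours from $[n-2]$, omitting colour $\phi(v_iv_{i+1})$. So it is a rainbow Hamiltonian cycle in $\mathbf{H_{\phi(v_iv_{i+1})}}$, contradicting our standing assumption after Lemma~\ref{lem3}.

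The degenerate case $i=n-4$ is included: there $w_1w_2\in E(G_{f_1})$, and the cycle $P^*+w_1w_2$ omits the colours $f_2$ and $\phi(v_{n-4}v_{n-3})$, so it is again a rainbow Hamiltonian cycle in some $\mathbf{H_j}$. The case $i=1$ is symmetric. One must check these endpoint cases carefully, because the cycle lengths are exactly $n-3$.

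Therefore $S_1\cap S_2=\emptyset$, and $|S_1|+|S_2|\le n-4$.

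\emph{Individual upper bounds.} Combining the two estimates, each degree is at least $\frac{n-5}{2}$ and the sum is at most $n-4$. Hence each degree is at most $n-4-\frac{n-5}{2}=\frac{n-3}{2}$. Since $n$ is odd, all these quantities are integers.

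The main obstacle is only the bookkeeping that the rotated cycle has the right colour set and length. The remaining steps are direct counting.
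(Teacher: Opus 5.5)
Your proof is correct and follows the paper's argument. It uses the same index sets $\{i: w_1v_{i+1}\in E(G_{f_1})\}$ and $\{i: v_iw_2\in E(G_{f_2})\}$, the same crossing cycle $v_1P^*v_iv_{n-3}P^*v_{i+1}v_1$ to force disjointness, and the same count. The paper first invokes the absence of rainbow $(n-4)$-cycles to shrink the ranges to $[1,n-6]$ and $[3,n-4]$. Your version shows this step is unnecessary for the lemma, since the union of the ranges is $[1,n-4]$ either way.

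There is one minor slip in your treatment of $i=n-4$. A rainbow cycle with $n-3$ edges omits exactly one colour: $P^*+w_1w_2$ omits only $f_2$, while your rotated cycle (the same cycle with $v_{n-4}v_{n-3}$ coloured $f_2$) omits only $\phi(v_{n-4}v_{n-3})$. The contradiction is unaffected, and your general formula already covers $i=1$ and $i=n-4$ without separate treatment.
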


\begin{proof}
Without loss of generality, let $P^*=v_1v_2\cdots v_{n-3}$, where $v_1=w_1$ and $v_{n-3}=w_2$.
Then $v_1v_{n-4},v_1v_{n-3}\notin E(G_{f_1})$ and $v_1v_{n-3},v_2v_{n-3}\notin E(G_{f_2})$ since $\mathbf{H}$ contains neither a rainbow $(n-3)$-cycle nor a rainbow $(n-4)$-cycle.
Now we consider the following sets:
\[
I_{f_1}=\{i\in[1,n-6]:v_1v_{i+1}\in E(G_{f_1})\}, \quad I_{f_2}=\{i\in[3,n-4]:v_iv_{n-3}\in E(G_{f_2})\}.
\]

Clearly, $|I_{f_1}|=d_{G_{f_1}}(w_1,P^*)\geq d_{G_{f_1}}(v_1)-3\geq\frac{n-5}{2}$ and $|I_{f_2}|=d_{G_{f_2}}(w_2,P^*)\geq d_{G_{f_2}}(v_{n-3})-3\geq\frac{n-5}{2}$, which implies $|I_{f_1}|+|I_{f_2}|\geq n-5$.

If $I_{f_1}\cap I_{f_2}\neq\emptyset$, say, $i\in I_{f_1}\cap I_{f_2}$, then $v_1P^*v_iv_{n-3}P^*v_{i+1}v_1$ is a rainbow $(n-3)$-cycle in $\mathbf{H}$, a contradiction. Thus $I_{f_1}\cap I_{f_2}=\emptyset$, which implies $n-5\leq |I_{f_1}|+|I_{f_2}|=|I_{f_1}\cup I_{f_2}|\leq n-4$. Since $n$ is odd, we have $|I_{f_1}|,|I_{f_2}|\in\{\frac{n-5}{2},\frac{n-3}{2}\}$.
\end{proof}

For a set $A$, we denote $\max A = \max\{x: x\in A\}$ and $\min A = \min\{x: x\in A\}$.

\begin{lemma}\label{lem6}
If $\mathbf{H_j}$ has a rainbow Hamiltonian path for some $j\in[n-2]$, then $\mathbf{G}$ is rainbow panconnected.
\end{lemma}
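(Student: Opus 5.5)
Write $j=n-2$ without loss of generality, and let $P^*=v_1v_2\cdots v_{n-3}$ be a rainbow Hamiltonian path in $\mathbf{H_{n-2}}$. It uses $n-4$ colours from $[n-3]$, so one colour $f\in[n-3]$ is unused. Together with colour $n-2$, the pair $\{f_1,f_2\}=\{f,n-2\}$ is exactly as in Lemma \ref{lem5}. The colour $n-1$, with $xz\notin E(G_{n-1})$, also remains free. So we have three spare colours $f_1,f_2,n-1$. The plan is to build, for each $k\in[4,n-1]$, a path of the form
\[
x\,v_a P^* v_b\, y \quad\text{or}\quad x\,v_a P^* v_b\, w\, y
\]
using two or three spare colours. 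These would mirror Lemmas \ref{lem2} and \ref{lem3}, where the cycle is replaced by the path $P^*$.

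\textbf{Step 1: structure from Lemma \ref{lem5}.} Lemma \ref{lem5} gives $d_{G_{f_1}}(v_1,P^*),d_{G_{f_2}}(v_{n-3},P^*)\in\{\frac{n-5}{2},\frac{n-3}{2}\}$, and its proof gives disjointness $I_{f_1}\cap I_{f_2}=\emptyset$ with $|I_{f_1}\cup I_{f_2}|\ge n-5$. I will push this further, exactly as in Claims \ref{claim1}--\ref{claim3}. Whenever $v_1v_{i+1}\in E(G_{f_1})$ and $v_iv_{n-3}\in E(G_{f_2})$, we get a rainbow $(n-3)$-cycle. Further crossing patterns can be rotated using the non-spare colour of a path edge to produce an $(n-4)$-cycle or an $(n-3)$-cycle in some $\mathbf{H_{j'}}$. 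Both are excluded by Lemmas \ref{lem2} and \ref{lem3}. I expect to derive that $N_{G_{f_1}}(v_1,P^*)$ and $N_{G_{f_2}}(v_{n-3},P^*)$ are essentially alternate vertices of $P^*$. Since these degrees are forced to be small, both endpoints must also be adjacent to (almost) all of $x,y,z$ in the corresponding graphs.

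\textbf{Step 2: constructing the $k$-paths.} First suppose $N_{G_{n-1}}(x)$ hits $P^*$ in at least $\frac{n-3}{2}$ vertices (it misses $z$). Use $y$'s neighbourhood in $G_{f_2}$ or $G_{f_1}$, of size at least $\frac{n-5}{2}$ on $P^*$, and count intervals of length $k-3$ along $P^*$, as in Lemma \ref{lem2}. The index sets $\{a: xv_a\in E(G_{n-1})\}$ and $\{a: yv_{a\pm(k-3)}\in E(G_{f})\}$ live in a window of size about $n-2-k$. They intersect by pigeonhole when $k$ is small. For large $k$ the window shrinks, and I instead route through the endpoint $v_1$ or $v_{n-3}$. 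That endpoint is adjacent to $y$ (respectively $z$) by Step 1, so it serves the role that $w$ played in Lemma \ref{lem3}. A path like $x v_a P^* v_1 y$ or $x v_a P^{*} v_{n-3} z y$ then adjusts the length by one. This gives the parity flexibility that Cases 2--3 of Lemma \ref{lem3} obtained from the alternating sets $U_1,U_2$.

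\textbf{Main obstacle.} The hard part is the extremal range, $k$ close to $n-1$ and $k=4$, when $N_{G_{n-1}}(x)$ and $N_{G_f}(y)$ on $P^*$ are both exactly alternating and out of phase. Plain counting then fails by one. Here I would first try to exploit $z$: $z$ has degree at least $\frac{n+1}{2}$ in every $G_i$, so it can be inserted between two consecutive path vertices, or attached to an endpoint, using the spare colour not yet spent. If that also fails, every relevant neighbourhood is rigidly determined. I would then argue that this rigidity forces an $(n-4)$- or $(n-3)$-cycle in some $\mathbf{H_{j'}}$, which contradicts the standing assumption. Failing that, the rigid structure would have to be identified with the exceptional family $\mathbf{F_{n-1}}$ of Definition \ref{def1-2}.
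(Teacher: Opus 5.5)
Your outline has a genuine gap. None of its load-bearing steps is carried out, and its one concrete structural prediction points the wrong way.

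In Step~1 you expect $N_{G_{f_1}}(v_1,P^*)$ and $N_{G_{f_2}}(v_{n-3},P^*)$ to alternate along $P^*$, by analogy with Claims~\ref{claim1}--\ref{claim3} of Lemma~\ref{lem3}. For a path the opposite happens. Besides the crossing $v_1v_{i+1}\in E(G_{f_1})$, $v_iv_{n-3}\in E(G_{f_2})$, the excluded rainbow $(n-4)$-cycle $v_1v_{i+2}P^*v_{n-3}v_iP^*v_1$ forbids $v_1v_{i+2}\in E(G_{f_1})$ together with $v_iv_{n-3}\in E(G_{f_2})$. So every gap in the neighbourhood of $v_1$ costs $v_{n-3}$ one more position. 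Since the two degrees already sum to at least $n-5$, both neighbourhoods are forced to consist of at most two consecutive blocks (the paper's cases $s=3,l=1$; $s=2,l=2$; $s=2,l=1$). An alternating neighbourhood is impossible for $n\ge 11$, so the parity mechanism of Cases~2--3 of Lemma~\ref{lem3} has nothing to attach to.

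Step~2 also fails on its own terms:
\begin{itemize}
\item Routing $xv_aP^*v_1y$ or $xv_aP^*v_{n-3}y$ needs $xv_{k-2}$ or $xv_{n-k}\in E(G_{n-1})$ for each $k$. But $N_{G_{n-1}}(x)$ is only known to contain some $\frac{n-3}{2}$ vertices of $P^*$ in uncontrolled positions, and only one endpoint of $P^*$ is guaranteed to see $y$.
\item The pigeonhole of Lemma~\ref{lem2}, moved from a cycle to a path, loses up to $k-3$ elements of each index set and no longer forces an intersection.
\item The $z$-repairs need edges such as $zy$, or two consecutive neighbours of $z$ in prescribed free colours, and none of these is guaranteed.
\end{itemize}
Finally, the fallback to $\mathbf{F_{n-1}}$ is not an option, because the lemma asserts the paths outright. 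Indeed, if $x,y$ were the bad pair of $\mathbf{F_{n-1}}$, then $z\in V(Q_2)$ and $V(Q_1)$ would be an independent set of $\frac{n-1}{2}$ of the $n-3$ vertices of every $H_i$. Then no $\mathbf{H_j}$ could have a rainbow Hamiltonian path.

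The missing idea is the paper's Claim~\ref{claim5}, which makes $G_{n-1}$ unnecessary; the paper's proof of this lemma uses only colours in $[n-2]$. You correctly sense that a low-degree endpoint sees $x,y,z$. The point is to use it as a hub for \emph{both} ends of the path, and to manufacture new endpoints by rotation.
\begin{itemize}
\item Write $P=u_1\cdots u_{n-3}$ with free colours $n-3$ at $u_1$ and $n-2$ at $u_{n-3}$. By Lemma~\ref{lem5} one endpoint, say $u_1$, has $d_{G_{n-3}}(u_1,P)=\frac{n-5}{2}$, so $x,y,z\in N_{G_{n-3}}(u_1)$.
\item For every $u_{k-1}\in N_{G_{n-3}}(u_1)$, the rotation $P'=u_{k-2}Pu_1u_{k-1}Pu_{n-3}$ is a rainbow Hamiltonian path of $\mathbf{H}$ with missing colours $k-2$ and $n-2$.
\item Lemma~\ref{lem5} applied to $P'$ gives $d_{G_{k-2}}(u_{k-2},P')\le\frac{n-3}{2}$. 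Hence $u_{k-2}$ is adjacent in $G_{k-2}$ to $x$ or $y$, and $xu_1Pu_{k-2}y$ or $xu_{k-2}Pu_1y$ is a rainbow $k$-path. This gives every $k$ with $u_{k-1}$ in the block(s) of $u_1$.
\item The remaining lengths come from paths such as $xu_1Pu_{k-3}u_{n-3}y$. These use a chord $u_{k-3}u_{n-3}\in E(G_{n-2})$ from the block neighbourhood of $u_{n-3}$, and an end-edge at $u_{n-3}$ in colour $n-4$. That end-edge is obtained by re-colouring the last edge of $P$ with $n-2$ (when $u_{n-4}u_{n-3}\in E(G_{n-2})$) and applying Lemma~\ref{lem5} once more.
\item A few boundary lengths are done by hand, such as $k=a_1+2$, or $k\in\{\frac{n+1}{2},\frac{n+3}{2}\}$ in Subcase~3.2, with $n\in\{7,9\}$ treated separately.
\end{itemize}
It is precisely the block structure that lets these two families of lengths cover all of $[4,n-1]$.
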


\begin{proof}
Without loss of generality, let $j=n-2$ and $P=u_1u_2\cdots u_{n-3}$ be a rainbow Hamiltonian path in $\mathbf{H_{n-2}}$ with $u_iu_{i+1}\in E(G_i)$ for each $i\in [n-4]$. Note that $P$ has no edges from $G_{n-3}$ or $G_{n-2}$, then we define the following two sets:
\[
A_1=\{i\in[2,n-5]:u_1u_{i}\in E(G_{n-3})\}, \quad B_1=\{i\in[3,n-4]:u_iu_{n-3}\in E(G_{n-2})\}.
\]
Since $\mathbf{H}$ contains no rainbow $(n-4)$-cycles or $(n-3)$-cycles, we have $u_1u_{n-4}\notin E(G_{n-3})$, $u_1u_{n-3}\notin E(G_{n-3})\cup E(G_{n-2})$ and $u_2u_{n-3}\notin E(G_{n-2})$.
Thus $|A_1|=d_{G_{n-3}}(u_1,P)$ and $|B_1|=d_{G_{n-2}}(u_{n-3},P)$.

By Lemma \ref{lem5}, we have $n-5\leq |A_1|+|B_1|\leq n-4$ and $|A_1|,|B_1|\in\{\frac{n-5}{2},\frac{n-3}{2}\}$. Without loss of generality, we assume that $|A_1|=\frac{n-5}{2}$, which implies $x,y,z\in N_{G_{n-3}}(u_1)$ since $\delta(G_{n-3})\geq\frac{n+1}{2}$.

Now we compute $|B_1|$.
Let $s=\min A_1$ and $t=\max A_1$. Then $2\leq s\leq t\leq n-5$. We partition $A_1$ into $l(\geq1)$ maximal consecutive integer subsets $D_1,D_2,\dots,D_l$ such that $\max D_p < \min D_q$ for all $p<q$ (each $D_i$ may consist of a single element). Let $b_{i-1}=\min D_i$ and  $a_i=\max D_i$ for each $i \in [l]$. Then $2\leq s=b_0\leq a_1<b_1\leq a_2<b_2\leq\dots<b_{l-1}\leq a_l=t\leq n-5$, and $b_i\geq a_i+2$ for any $i\in[l-1]$. In particular, $\{u_s,u_{s+1},\dots,u_{t-1}\}=\emptyset$ if $s=t$ (i.e., $n=7$ and $|A_1|=1$), $\{u_{b_{i-1}},u_{b_{i-1}+1},\dots,u_{a_{i}-1}\}=\emptyset$ if $|D_i|=1$, and $\{u_{a_i},u_{a_i+1},\dots,u_{b_i-3}\}=\emptyset$ if $b_i=a_i+2$.

By the definition of $B_1$, we have
\begin{equation}\tag{$2$}\label{e2}
|B_1|=d_{G_{n-2}}(u_{n-3},u_1Pu_{s-1})+d_{G_{n-2}}(u_{n-3},u_sPu_{t-1})+d_{G_{n-2}}(u_{n-3},u_tPu_{n-4}) .
\end{equation}

\begin{claimnum}\label{c1}
$d_{G_{n-2}}(u_{n-3},u_1Pu_{s-1})\leq
\begin{cases}
0, & \text{if } 2\leq s\leq 5;\\
s-5, & \text{if } s\geq 6.
\end{cases}$
\end{claimnum}

\begin{proof}
Clearly, $u_1,u_2\notin N_{G_{n-2}}(u_{n-3})$. For $s\geq4$, we show $N_{G_{n-2}}(u_{n-3})\cap\{u_{s-2},u_{s-1}\}=\emptyset$.

If $u_{s-2}\in N_{G_{n-2}}(u_{n-3})$, then $u_1u_sPu_{n-3}u_{s-2}Pu_1$ is a rainbow $(n-4)$-cycle in $\mathbf{H}$ with $u_1u_s\in E(G_{n-3})$ and $u_{n-3}u_{s-2}\in E(G_{n-2})$, a contradiction.

If $u_{s-1}\in N_{G_{n-2}}(u_{n-3})$, then $u_1u_sPu_{n-3}u_{s-1}Pu_1$ is a rainbow $(n-3)$-cycle in $\mathbf{H}$ with $u_1u_s\in E(G_{n-3})$ and $u_{n-3}u_{s-1}\in E(G_{n-2})$, a contradiction.

Therefore, $d_{G_{n-2}}(u_{n-3},u_1Pu_{s-1})=0$ for $2\leq s\leq 5$ and  $d_{G_{n-2}}(u_{n-3},u_1Pu_{s-1})\leq s-5$ for $s\geq 6$.
\end{proof}

\begin{claimnum}\label{c2}
$d_{G_{n-2}}(u_{n-3},u_sPu_{t-1})\leq
\begin{cases}
0, & \text{if } l=1;\\
t-s-l-\frac{n-9}{2}, & \text{if } l\geq 2.
\end{cases}$
\end{claimnum}

\begin{proof}
If $l=1$, then $N_{G_{n-2}}(u_{n-3})\cap\{u_s,u_{s+1},\dots,u_{t-1}\}=\emptyset$. Otherwise, there exists some $i\in\{s,s+1,\dots,t-1\}$ such that $u_{n-3}u_{i}\in E(G_{n-2})$, then $u_1u_{i+1}Pu_{n-3}u_iPu_1$ is a rainbow $(n-3)$-cycle in $\mathbf{H}$ with $u_1u_{i+1}\in E(G_{n-3})$ and $u_{n-3}u_i\in E(G_{n-2})$, a contradiction. Thus $d_{G_{n-2}}(u_{n-3},u_sPu_{t-1})=0$.

If $l\geq2$, then

\begin{itemize}
\item $N_{G_{n-2}}(u_{n-3})\cap\{u_{b_{i-1}},u_{b_{i-1}+1},\dots,u_{a_{i}-1}\}=\emptyset$ for each $i\in[l]$. If not, there exists some $p\in\{b_{i-1},b_{i-1}+1,\dots,a_{i}-1\}$ such that $u_{n-3}u_p\in E(G_{n-2})$, thus $u_1u_{p+1}Pu_{n-3}u_pPu_1$ is a rainbow $(n-3)$-cycle in $\mathbf{H}$ with $u_1u_{p+1}\in E(G_{n-3})$ and $u_{n-3}u_p\in E(G_{n-2})$, a contradiction.
\item $N_{G_{n-2}}(u_{n-3})\cap\{u_{b_i-2},u_{b_i-1}\}=\emptyset$ for each $i\in[l-1]$. Otherwise, if $u_{n-3}u_{b_i-2}\in E(G_{n-2})$, then $u_1u_{b_i}Pu_{n-3}u_{b_i-2}Pu_1$ is a rainbow $(n-4)$-cycle in $\mathbf{H}$, a contradiction; if $u_{n-3}u_{b_i-1}\in E(G_{n-2})$, then $u_1u_{b_i}Pu_{n-3}u_{b_i-1}Pu_1$ is a rainbow $(n-3)$-cycle in $\mathbf{H}$, a contradiction.
\end{itemize}

Therefore, $N_{G_{n-2}}(u_{n-3})\cap\{u_s,u_{s+1},\dots,u_{t-1}\}\subseteq \bigcup\limits_{i=1}^{l-1}\{u_{a_i},u_{a_i+1},\dots,u_{b_i-3}\}$, and thus $d_{G_{n-2}}(u_{n-3},u_sPu_{t-1})\leq \sum\limits_{i=1}^{l-1}(b_i-a_i-2)$.

On the other hand, if $l\geq2$, by the definition of $A_1$ and the above notation, the number of vertices in $\{u_s,u_{s+1},\dots,u_{t}\}\setminus N_{G_{n-3}}(u_1)$
is
\begin{equation}\tag{$3$}\label{e3}
\sum\limits_{i=1}^{l-1}(b_i-a_i-1)=t-s+1-d_{G_{n-3}}(u_1,P)=t-s-\frac{n-7}{2}.
\end{equation}

By \eqref{e3}, we have
\begin{equation*}
d_{G_{n-2}}(u_{n-3},u_sPu_{t-1}) \leq \sum\limits_{i=1}^{l-1}(b_i-a_i-2)=t-s-l-\frac{n-9}{2}.
\end{equation*}

Therefore, Claim \ref{c2} holds.
\end{proof}

\begin{claimnum}\label{c3}
One of the following three cases holds: $(\mathrm{a})$ $s=3$, $l=1$; $(\mathrm{b})$ $s=2$, $l=2$; $(\mathrm{c})$ $s=2$, $l=1$.
\end{claimnum}

\begin{proof}
We first show $2\leq s\leq 5$. Suppose for contradiction that $s\geq6$.

If $l=1$, then $|A_1|=t-s+1=\frac{n-5}{2}$, which implies that $s-t=-\frac{n-7}{2}$. By Claims \ref{c1}-\ref{c2} and \eqref{e2}, we have
\begin{equation*}
|B_1|\leq (s-5)+(n-3-t)=\frac{n-9}{2},
\end{equation*}
a contradiction to $|B_1|\in\{\frac{n-5}{2},\frac{n-3}{2}\}$.

If $l\geq 2$, by Claims \ref{c1}-\ref{c2} and \eqref{e2}, we have
\begin{equation*}
|B_1|\leq (s-5)+(t-s-l-\frac{n-9}{2})+(n-3-t) \leq \frac{n-11}{2},
\end{equation*}
a contradiction to $|B_1|\in\{\frac{n-5}{2},\frac{n-3}{2}\}$.

Therefore, $2\leq s\leq 5$. By Claims \ref{c1}-\ref{c2} and \eqref{e2}, we obtain
\begin{equation}\tag{$4$}\label{eq:1}
\begin{aligned}
|B_1| &= d_{G_{n-2}}(u_{n-3},u_sPu_{t-1}) + d_{G_{n-2}}(u_{n-3},u_tPu_{n-4}) \\
&\leq
\begin{cases}
n-3-t=n-3-(\frac{n-7}{2}+s)=\frac{n+1}{2}-s, & \text{if } l=1; \\[4pt]
(t-s-l-\frac{n-9}{2})+(n-3-t)=\frac{n+3}{2}-s-l, & \text{if } l\geq 2.
\end{cases}
\end{aligned}
\end{equation}

Since $|B_1|\in\{\frac{n-5}{2},\frac{n-3}{2}\}$, by \eqref{eq:1}, there exist the following three cases: $(\mathrm{a})$ $s=3$, $l=1$; $(\mathrm{b})$ $s=2$, $l=2$; $(\mathrm{c})$ $s=2$, $l=1$.
\end{proof}

Combining the above arguments, to prove that $\mathbf{G}$ is rainbow panconnected, we only need to show $(\mathrm{R_1})$ holds for the following three cases: $(\mathrm{a})$ $s=3$, $l=1$; $(\mathrm{b})$ $s=2$, $l=2$; $(\mathrm{c})$ $s=2$, $l=1$. To this end, we show the following claim.

\begin{claimnum}\label{claim5}
For every integer $k\in [3,t+1]$, if $u_{k-1}\in N_{G_{n-3}}(u_1)$, then there exists a rainbow $k$-path joining $x$ and $y$ in $\mathbf{G}$.
\end{claimnum}

\begin{proof}
Choose an arbitrary integer $k\in [3,t+1]$ such that $u_{k-1}\in N_{G_{n-3}}(u_1)$, then $P'=u_{k-2}Pu_1u_{k-1}Pu_{n-3}$ is a rainbow Hamiltonian path in $\mathbf{H}$ with an injective $\varphi\colon E(P')\to [n-2]$ and $\{k-2,n-2\}=[n-2]\setminus im(\varphi)$. By Lemma \ref{lem5}, we obtain $d_{G_{k-2}}(u_{k-2},P')\in\{\frac{n-5}{2},\frac{n-3}{2}\}$, which implies $N_{G_{k-2}}(u_{k-2})\cap\{x,y\}\neq\emptyset$ since $\delta(G_{k-2})\geq\frac{n+1}{2}$. Recall that $\{x,y\}\subseteq N_{G_{n-3}}(u_1)$. Thus $xu_1Pu_{k-2}y$ is a rainbow $k$-path with $xu_1\in E(G_{n-3})$ and $u_{k-2}y\in E(G_{k-2})$ if $y\in N_{G_{k-2}}(u_{k-2})$; or $xu_{k-2}Pu_1y$ is a rainbow $k$-path with $xu_{k-2}\in E(G_{k-2})$ and $u_1y\in E(G_{n-3})$ if $x\in N_{G_{k-2}}(u_{k-2})$.
\end{proof}

Now we show that $(\mathrm{R_1})$ holds for all three cases $(\mathrm{a})$, $(\mathrm{b})$ and $(\mathrm{c})$.

$\mathbf{Case~1.}$ $s=3$, $l=1$.
\vspace{6pt}

Together with $|A_1|=\frac{n-5}{2}$ and the definition of $s,t,l$, we obtain $t=\frac{n-1}{2}$ and $N_{G_{n-3}}(u_1)=\{u_3,u_4,\dots,u_{\frac{n-1}{2}},x,y,z\}$.
Substituting the values of $s,t$ into \eqref{eq:1}, we have $|B_1|\leq \frac{n-5}{2}$, which together with $|B_1|\in\{\frac{n-5}{2}, \frac{n-3}{2}\}$ yields $|B_1| = \frac{n-5}{2}$. It follows that $N_{G_{n-2}}(u_{n-3})=\{u_{\frac{n-1}{2}},u_{\frac{n+1}{2}},\dots,u_{n-4},x,y,z\}$ since $\delta(G_{n-2})\geq\frac{n+1}{2}$.

Let $P''=u_1u_2\cdots u_{n-3}$ be a rainbow Hamiltonian path with $u_iu_{i+1}\in E(G_i)$ for each $i\in [n-5]$ and $u_{n-4}u_{n-3}\in E(G_{n-2})$. Note that $P''$ has no edges from $G_{n-4}$ or $G_{n-3}$. Applying the same discussion to $P''$ as to $P$, we have $N_{G_{n-4}}(u_{n-3})=N_{G_{n-2}}(u_{n-3})=\{u_{\frac{n-1}{2}},u_{\frac{n+1}{2}},\dots,u_{n-4},x,y,z\}$.

If $k=n-1$, then $xu_1Pu_{n-3}y$ is the desired rainbow $k$-path with $xu_1\in E(G_{n-3})$ and $u_{n-3}y\in E(G_{n-2})$.

If $k\in\left[4,\frac{n+1}{2}\right]$, then $u_{k-1}\in \{u_3,u_4,\dots,u_{\frac{n-1}{2}}\}\subseteq N_{G_{n-3}}(u_1)$. By Claim \ref{claim5}, there exists a rainbow $k$-path joining $x$ and $y$.

If $k\in\left[\frac{n+3}{2},n-2\right]$, then $u_{k-2}\in \{u_{\frac{n-1}{2}},u_{\frac{n+1}{2}},\dots,u_{n-4}\}\subseteq N_{G_{n-2}}(u_{n-3})$. We can claim that $N_{G_{n-3}}(u_2)\cap\{u_{\frac{n+1}{2}},u_{\frac{n+3}{2}},\dots,u_{n-3}\}=\emptyset$. Otherwise, if $u_2u_i\in E(G_{n-3})$ for some $i\in\left[\frac{n+1}{2},n-3\right]$, then $u_2u_iPu_{n-3}u_{i-1}Pu_2$ is a rainbow $(n-4)$-cycle in $\mathbf{H}$ with $u_2u_i\in E(G_{n-3})$ and $u_{n-3}u_{i-1}\in E(G_{n-2})$, a contradiction. Therefore, we have
\[
d_{G_{n-3}}(u_2,P) \leq n-3-|\{u_{\frac{n+1}{2}},u_{\frac{n+3}{2}},\dots,u_{n-3}\}|-1= \frac{n-3}{2},
\]
which implies $N_{G_{n-3}}(u_2)\cap\{x,y\}\neq\emptyset$ since $\delta(G_{n-3})\geq \frac{n+1}{2}$. It follows that $xu_2Pu_{k-2}u_{n-3}y$ is the desired rainbow $k$-path with $xu_2\in E(G_{n-3})$, $u_{k-2}u_{n-3}\in E(G_{n-2})$ and $u_{n-3}y\in E(G_{n-4})$ if $x\in N_{G_{n-3}}(u_2)$; or $xu_{n-3}u_{k-2}Pu_2y$ is the desired rainbow $k$-path with $xu_{n-3}\in E(G_{n-4})$, $u_{n-3}u_{k-2}\in E(G_{n-2})$ and $u_2y\in E(G_{n-3})$ if $y\in N_{G_{n-3}}(u_2)$.

Combining the above arguments, there exists a rainbow $k$-path joining $x$ and $y$ in $\mathbf{G}$ for every integer $k\in[4,n-1]$, i.e., $(\mathrm{R_1})$ holds.

\vspace{6pt}
$\mathbf{Case~2.}$ $s=2$, $l=2$.
\vspace{6pt}

In this case, $D_1=[2,a_1]$ and $D_2=[b_1,t]$, which implies $N_{G_{n-3}}(u_1)=\{u_2,\dots,u_{a_1}\}\cup\{u_{b_1},\dots,u_{t}\}\cup\{x,y,z\}$.
Note that $|\{a_1,\dots,b_1-3\}\cup \{t,\dots,n-4\}|=\frac{n-5}{2}$ since $|A_1|=a_1-b_1+t=\frac{n-5}{2}$,
$B_1\subseteq \{a_1,\dots,b_1-3\}\cup \{t,\dots,n-4\}$ and $|B_1|\in \{\frac{n-5}{2},\frac{n-3}{2}\}$, we have $B_1= \{a_1,\dots,b_1-3\}\cup \{t,\dots,n-4\}$ and thus $N_{G_{n-2}}(u_{n-3})=\{u_{a_1},\dots,u_{b_1-3}\}\cup\{u_t,\dots,u_{n-4}\}\cup\{x,y,z\}$.

Similar to the discussion of $P''$ in Case 1, we can obtain $N_{G_{n-4}}(u_{n-3})=N_{G_{n-2}}(u_{n-3})$.

If $k\in[3,a_1+1]\cup[b_1+1,t+1]$, then $u_{k-1}\in N_{G_{n-3}}(u_1)$. By Claim \ref{claim5}, there exists a rainbow $k$-path joining $x$ and $y$.

If $k\in[t+3,n-1]$, then $u_{k-3}\in N_{G_{n-2}}(u_{n-3})$, and thus $xu_1Pu_{k-3}u_{n-3}y$ is the desired rainbow $k$-path with $xu_1\in E(G_{n-3})$, $u_{k-3}u_{n-3}\in E(G_{n-2})$ and $u_{n-3}y\in E(G_{n-4})$.

If $k\in[a_1+3,b_1]$, then $u_{k-3}\in N_{G_{n-2}}(u_{n-3})$, and thus $xu_1Pu_{k-3}u_{n-3}y$ is the desired rainbow $k$-path with $xu_1\in E(G_{n-3})$, $u_{k-3}u_{n-3}\in E(G_{n-2})$ and $u_{n-3}y\in E(G_{n-4})$.

If $k=a_1+2$ with $a_1<b_1-2$ or $k=t+2$, then $u_{k-2}\in N_{G_{n-2}}(u_{n-3})$.
We claim $N_{G_{n-3}}(u_2)\cap(\{u_{a_1+1},\dots,u_{b_1-2}\}\cup\{u_{t+1},\dots,u_{n-3}\})=\emptyset$. Otherwise, there exists some $i\in[a_1+1,b_1-2]\cup[t+1,n-3]$ such that $u_2u_i\in E(G_{n-3})$. Note that $u_{i-1}u_{n-3}\in E(G_{n-2})$, thus $u_2u_iPu_{n-3}u_{i-1}Pu_2$ is a rainbow $(n-4)$-cycle in $\mathbf{H}$, a contradiction. In addition, $|\{u_{a_1+1},\dots,u_{b_1-2}\}\cup\{u_{t+1},\dots,u_{n-3}\}|=|\{u_{a_1},\dots,u_{b_1-3}\}\cup\{u_t,\dots,u_{n-4}\}|=d_{G_{n-2}}(u_{n-3},P)$.
Thus
\[
d_{G_{n-3}}(u_2,P) \leq n-3-d_{G_{n-2}}(u_{n-3},P)-1= \frac{n-3}{2}.
\]
Since $\delta(G_{n-3})\geq\frac{n+1}{2}$, $N_{G_{n-3}}(u_2)\cap\{x,y\}\neq\emptyset$. It follows that $xu_2Pu_{k-2}u_{n-3}y$ is the desired rainbow $k$-path with $xu_2\in E(G_{n-3})$, $u_{k-2}u_{n-3}\in E(G_{n-2})$ and $u_{n-3}y\in E(G_{n-4})$ if $x\in N_{G_{n-3}}(u_2)$; or $xu_{n-3}u_{k-2}Pu_2y$ is the desired rainbow $k$-path with $xu_{n-3}\in E(G_{n-4})$, $u_{k-2}u_{n-3}\in E(G_{n-2})$ and $u_2y\in E(G_{n-3})$ if $y\in N_{G_{n-3}}(u_2)$.

If $k=a_1+2$ with $a_1=b_1-2$, then $N_{G_{n-2}}(u_{n-3},P)=\{u_t,\dots,u_{n-4}\}$. Since $|B_1|=d_{G_{n-2}}(u_{n-3},P)=\frac{n-5}{2}$, we have $t=\frac{n-1}{2}$. It follows from $4\leq k=b_1\leq t=\frac{n-1}{2}$ that $n-k-1\in\left[\frac{n-1}{2},n-5\right]$, which implies $u_{n-k-1}\in N_{G_{n-2}}(u_{n-3})$. Note that $P'''=u_1Pu_{n-k-1}u_{n-3}Pu_{n-k}$ is a rainbow Hamiltonian path in $\mathbf{H}$ with an injective $\psi\colon E(P''')\to [n-2]$ and $\{n-3,n-k-1\}=[n-2]\setminus im(\psi)$. Combining this with Lemma \ref{lem5}, $d_{G_{n-k-1}}(u_{n-k},P''')\in\{\frac{n-5}{2},\frac{n-3}{2}\}$, and thus $N_{G_{n-k-1}}(u_{n-k})\cap\{x,y\}\neq\emptyset$ since $\delta(G_{n-k-1})\geq\frac{n+1}{2}$. It follows that $xu_{n-k}Pu_{n-3}y$ is the desired rainbow $k$-path with $xu_{n-k}\in E(G_{n-k-1})$ and $u_{n-3}y\in E(G_{n-4})$ if $x\in N_{G_{n-k-1}}(u_{n-k})$; or $xu_{n-3}Pu_{n-k}y$ is the desired rainbow $k$-path with $xu_{n-3}\in E(G_{n-4})$ and $u_{n-k}y\in E(G_{n-k-1})$ if $y\in N_{G_{n-k-1}}(u_{n-k})$.

Combining the above arguments, $(\mathrm{R_1})$ holds.

\vspace{6pt}
$\mathbf{Case~3.}$ $s=2$, $l=1$.
\vspace{6pt}

It immediately follows that $N_{G_{n-3}}(u_1)=\{u_2,u_3,\dots,u_{\frac{n-3}{2}},x,y,z\}$ and $N_{G_{n-2}}(u_{n-3},P)\subseteq\{u_{\frac{n-3}{2}},u_{\frac{n-1}{2}},\dots,u_{n-4}\}$. Since $|B_1|\in\{\frac{n-5}{2},\frac{n-3}{2}\}$, we distinguish the following two subcases in terms of the value of $|B_1|$.

$\mathbf{Subcase~3.1.}$ $|B_1|=\frac{n-3}{2}$.

In this subcase, $N_{G_{n-2}}(u_{n-3},P)=\{u_{\frac{n-3}{2}},u_{\frac{n-1}{2}},\dots,u_{n-4}\}$. Now we show $n\geq 9$. Otherwise, we have $n=7$ and $u_{\frac{n-3}{2}}=u_2$, then $u_{n-3}u_2Pu_{n-3}$ is a rainbow $n-4$ cycle in $\mathbf{H}$ with $u_{n-3}u_2\in E(G_{n-2})$, a contradiction.

Note that $P=u_1u_2\cdots u_{n-3}$ is also a rainbow Hamiltonian path with $u_1u_2\in E(G_{n-3})$ and $u_iu_{i+1}\in E(G_i)$ for each $i\in [2,n-4]$. We can obtain $N_{G_1}(u_1,P)\subseteq\{u_2,u_3,\dots,u_{\frac{n-3}{2}}\}$. If not, there exists some $p\in[\frac{n-1}{2},n-3]$ such that $u_1u_p\in E(G_1)$, then $u_1u_pPu_{n-3}u_{p-1}Pu_1$ is a rainbow $(n-3)$-cycle in $\mathbf{H}$ with $u_1u_p\in E(G_1)$, $u_{n-3}u_{p-1}\in E(G_{n-2})$, $u_2u_1\in E(G_{n-3})$ and $u_iu_{i+1}\in E(G_i)$ for each $i\in[2,n-4]\setminus\{p-1\}$, a contradiction. Since $\delta(G_1)\geq\frac{n+1}{2}$, we have $N_{G_1}(u_1)=\{u_2,u_3,\dots,u_{\frac{n-3}{2}},x,y,z\}$.

Similar to the discussion of $P''$ in Case 1, we can obtain $N_{G_{n-4}}(u_{n-3},P)\subseteq N_{G_{n-2}}(u_{n-3})$, and $d_{G_{n-4}}(u_{n-3},P)\in\{\frac{n-5}{2},\frac{n-3}{2}\}$, which implies $N_{G_{n-4}}(u_{n-3})\cap\{x,y\}\neq\emptyset$. Without loss of generality, we assume $y\in N_{G_{n-4}}(u_{n-3})$.

If $k\in\left[3,\frac{n-1}{2}\right]$, then $u_{k-1}\in N_{G_{n-3}}(u_1)$. By Claim \ref{claim5}, there exists a rainbow $k$-path joining $x$ and $y$.

If $k\in\left[\frac{n+1}{2},n-2\right]$, then $u_{k-2}\in N_{G_{n-2}}(u_{n-3})$. Thus $xu_1u_3Pu_{k-2}u_{n-3}y$ is the desired rainbow $k$-path with $xu_1\in E(G_{n-3})$, $u_1u_3\in E(G_1)$, $u_{k-2}u_{n-3}\in E(G_{n-2})$ and $u_{n-3}y\in E(G_{n-4})$.

If $k=n-1$, then $xu_1Pu_{n-3}y$ is the desired rainbow $k$-path with $xu_1\in E(G_{n-3})$ and $u_{n-3}y\in E(G_{n-2})$.

Therefore, $(\mathrm{R_1})$ holds.

$\mathbf{Subcase~3.2.}$ $|B_1|=\frac{n-5}{2}$.

It is clear that there exists a unique $q\in\left[\frac{n-3}{2},n-4\right]$ such that $u_qu_{n-3}\notin E(G_{n-2})$.

If $q=n-4$, then this case coincides with Case 1 by setting $w_i=u_{n-2-i}$ for each $i\in[n-3]$ and replacing $u_i$ of Case 1 with $w_i$.
If $q\in\left[\frac{n-1}{2},n-5\right]$, then this case coincides with the subcase $b_i=a_i+2$ in Case 2.
By the same arguments as in Cases 1 and 2, we obtain a rainbow $k$-path joining $x$ and $y$ in $\mathbf{G}$, so we omit the detailed proof. It suffices to consider $q=\frac{n-3}{2}$, and we have $N_{G_{n-2}}(u_{n-3})=\{u_{\frac{n-1}{2}},\dots,u_{n-4},x,y,z\}$.

Since $u_{n-4}u_{n-3}\in E(G_{n-2})$, we can take $P''=u_1u_2\cdots u_{n-3}$ be a rainbow Hamiltonian path with $u_iu_{i+1}\in E(G_i)$ for each $i\in [n-5]$ and $u_{n-4}u_{n-3}\in E(G_{n-2})$ similar to the proof of Case 1. By Lemma \ref{lem5}, $d_{G_{n-4}}(u_{n-3},P)\in\{\frac{n-5}{2},\frac{n-3}{2}\}$.
If $d_{G_{n-4}}(u_{n-3},P)=\frac{n-3}{2}$, a similar analysis to that in Subcase 3.1 implies the existence of a rainbow $k$-path joining $x$ and $y$.
Now we assume that $d_{G_{n-4}}(u_{n-3},P)=\frac{n-5}{2}$. Then $x,y\in N_{G_{n-4}}(u_{n-3})$.

If $k\in\left[3,\frac{n-1}{2}\right]$, then $u_{k-1}\in N_{G_{n-3}}(u_1)$, and thus there exists a rainbow $k$-path joining $x$ and $y$ by Claim \ref{claim5}.

If $k\in\left[\frac{n+5}{2},n-1\right]$, then $u_{k-3}\in N_{G_{n-2}}(u_{n-3})$, and thus $xu_1Pu_{k-3}u_{n-3}y$ is the desired rainbow $k$-path with $xu_1\in E(G_{n-3})$, $u_{k-3}u_{n-3}\in E(G_{n-2})$ and $u_{n-3}y\in E(G_{n-4})$.

Now we consider $k\in\{\frac{n+1}{2},\frac{n+3}{2}\}$. Since $P=u_1u_2\cdots u_{n-3}$ is also a rainbow Hamiltonian path with $u_1u_2\in E(G_{n-3})$ and $u_iu_{i+1}\in E(G_i)$ for each $i\in [2,n-4]$, by Lemma \ref{lem5}, we have $d_{G_1}(u_1,P)\in\{\frac{n-5}{2},\frac{n-3}{2}\}$, and thus $N_{G_1}(u_1)\cap\{x,y\}\neq\emptyset$. We assume $xu_1\in E(G_1)$. The case $yu_1\in E(G_1)$ is similar, so we omit the details.

If $k=\frac{n+1}{2}$, then $xu_1u_{\frac{n-3}{2}}Pu_{n-6}u_{n-3}y$ is the desired rainbow $k$-path with $xu_1\in E(G_1)$, $u_1u_{\frac{n-3}{2}}\in E(G_{n-3})$, $u_{n-6}u_{n-3}\in E(G_{n-2})$ and $u_{n-3}y\in E(G_{n-4})$ for $n>9$; and $xu_1zu_6y$ is the desired rainbow $k$-path with $xu_1\in E(G_1)$, $u_1z\in E(G_6)$, $zu_6\in E(G_7)$ and $u_6y\in E(G_5)$ for $n=9$. For $n=7$, we have $N_{G_5}(u_2)\cap\{x,y\}\neq\emptyset$ since $u_2u_4\notin E(G_5)$ and $\delta(G_5)\geq 4$, it follows that $xu_2u_1y$ is the desired rainbow $k$-path with $xu_2\in E(G_5)$, $u_2u_1\in E(G_1)$ and $u_1y\in E(G_4)$ if $x\in N_{G_5}(u_2)$; or $xu_1u_2y$ is the desired rainbow $k$-path with $xu_1\in E(G_4)$, $u_1u_2\in E(G_1)$ and $u_2y\in E(G_5)$ if $y\in N_{G_5}(u_2)$.

If $k=\frac{n+3}{2}$, then $xu_1u_{\frac{n-3}{2}}Pu_{n-5}u_{n-3}y$ is the desired rainbow $k$-path with $xu_1\in E(G_1)$, $u_1u_{\frac{n-3}{2}}\in E(G_{n-3})$, $u_{n-5}u_{n-3}\in E(G_{n-2})$ and $u_{n-3}y\in E(G_{n-4})$ for $n>7$; and $xu_1zu_4y$ is the desired rainbow $k$-path with $xu_1\in E(G_1)$, $u_1z\in E(G_4)$, $zu_4\in E(G_5)$ and $u_4y\in E(G_3)$ for $n=7$.

Combining the above arguments, $(\mathrm{R_1})$ holds.

Therefore, $\mathbf{G}$ is rainbow panconnected, and the proof is complete.
\end{proof}

\begin{lemma}\label{lem7}
If $\mathbf{H_j}$ consists of $n-3$ copies of $K_{\frac{n-3}{2}}\cup K_{\frac{n-3}{2}}$ for some $j\in[n-2]$, then $\mathbf{G}$ is rainbow panconnected.
\end{lemma}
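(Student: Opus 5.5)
The plan is to exploit the rigid structure forced by the minimum degree condition. Without loss of generality let $j=n-2$, and let $X_1,X_2$ be the two cliques of size $m=\frac{n-3}{2}$ that partition $V\setminus\{x,y,z\}$, so that $H_i=K[X_1]\cup K[X_2]$ for every $i\in[n-3]$.

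\textbf{Step 1: forced neighbourhoods.} For $i\in[n-3]$ and any $u\in X_1\cup X_2$, we have $d_{H_i}(u)=m-1=\frac{n-5}{2}$. Since $\delta(G_i)\geq\frac{n+1}{2}=(m-1)+3$, we get $N_{G_i}(u)=(X_p\setminus\{u\})\cup\{x,y,z\}$, where $X_p$ is the clique containing $u$. So in each of the $n-3$ graphs $G_1,\dots,G_{n-3}$:
\begin{itemize}
\item $x$, $y$ and $z$ are complete to $X_1\cup X_2$;
\item each $X_p$ is a clique.
\end{itemize}
This is all that is needed from these graphs. We also keep the two remaining graphs $G_{n-2}$ and $G_{n-1}$ in reserve. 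Recall that $xz\notin E(G_{n-1})$.

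\textbf{Step 2: short paths, $k\in[4,m+2]$.} Take $x a_1 a_2\cdots a_{k-2} y$ with $a_1,\dots,a_{k-2}\in X_1$. Its $k-1\leq m+1\leq n-3$ edges all lie in every $G_i$ with $i\in[n-3]$. Hence they can be coloured injectively from $[n-3]$, giving a rainbow $k$-path.

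\textbf{Step 3: medium paths, $k\in[m+3,n-2]$.} Use $z$ as a bridge between the cliques: take
\[
x a_1\cdots a_r\, z\, b_1\cdots b_s\, y,\qquad a_i\in X_1,\ b_i\in X_2,\ r,s\geq 1,\ r,s\leq m,\ r+s=k-3 .
\]
Such $r,s$ exist because $2\leq k-3\leq 2m$. Every edge of this path lies in all of $G_1,\dots,G_{n-3}$. The number of edges is $k-1\leq n-3$, so again an injective colouring from $[n-3]$ works.

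\textbf{Step 4: the case $k=n-1$, which I expect to be the main obstacle.} Here the path has $n-2$ edges, one more than the number of ``universal'' colours. So exactly one edge must be coloured by $G_{n-2}$ or $G_{n-1}$, and these graphs are not structured. The plan is to use $G_{n-1}$ on an edge at $x$.

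Since $d_{G_{n-1}}(x)\geq\frac{n+1}{2}\geq 4$ and $xz\notin E(G_{n-1})$, the vertex $x$ has a $G_{n-1}$-neighbour $a\in X_1\cup X_2$; say $a\in X_1$, after relabelling the cliques. Build the path
\[
x\, a\, a_2\cdots a_r\, z\, b_1\cdots b_s\, y
\]
with $a$ as the first $X_1$-vertex and $r+s=n-4$, $1\leq r,s\leq m$. For instance take $r=m$, $s=m-1\geq1$, which uses every vertex except one vertex of $X_2$. Colour the edge $xa$ with $n-1$. The remaining $n-3$ edges are coloured injectively from $[n-3]$, which is possible since all those edges lie in each $G_i$ with $i\leq n-3$.

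If instead every $G_{n-1}$-neighbour of $x$ lies in $X_2\cup\{y\}$, the symmetric construction with $X_1$ and $X_2$ swapped applies. If needed, one could alternatively use $G_{n-2}$ at $y$ in the same way. Either way we obtain a rainbow $(n-1)$-path.

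\textbf{Conclusion.} Combining Steps 2--4, $(\mathrm{R_1})$ holds. Together with Lemma \ref{l1}, this shows that $\mathbf{G}$ is rainbow panconnected. The only delicate points are checking that the ranges of $r,s$ are nonempty for small $n$ (e.g.\ $n=7$, $m=2$) and making the colour count work at $k=n-1$ as described.
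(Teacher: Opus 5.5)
Your proof is correct, and it takes a somewhat different route from the paper in two respects: how the two cliques are joined, and which spare graph supplies the extra colour.

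\begin{itemize}
\item \textbf{The paper's route.} It also walks inside one clique for $k\le\frac{n+1}{2}$. For every longer $k$, however, it spends the colour $j$ on the crossing between the cliques. It splits according to whether $G_j$ has an edge between $U_1$ and $U_2$. If such an edge exists, it is the bridge. If not, $\delta(G_j)\ge\frac{n+1}{2}$ forces $z$ to be $G_j$-adjacent to all of $U_1\cup U_2$, and the path crosses through $z$ using one $G_j$-edge. The graph $G_{n-1}$ is never used. The paper does record that $x$ has $G_{n-1}$-neighbours in both cliques, which is exactly the fact your Step 4 needs, but its construction does not use it.
\item \textbf{Your route.} You observe that $z$ is already a bridge in every one of the $n-3$ structured graphs. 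So no spare colour is needed for any $k\le n-2$. Only at $k=n-1$, where the path has $n-2$ edges, do you need one extra colour. You take it from $G_{n-1}$ on an edge $xa$ with $a\in X_1\cup X_2$, which exists by the degree of $x$ alone.
\end{itemize}

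What each buys: your argument avoids the case split on $G_j$ and never inspects $G_j$ at all. The paper's argument never inspects $G_{n-1}$. Your colour counts check out, including the boundary case $n=7$ ($m=2$, $s=m-1=1$).
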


\begin{proof}
Since $H_i=G_i-\{x,y,z\}$ for each $i\in[n-2]$ and $\mathbf{H_j}=\mathbf{H}\setminus\{H_j\}$ for $j\in[n-2]$, we can assume that $V\setminus\{x,y,z\}=U_1\cup U_2$ such that $G_{i}[U_1]=K_{\frac{n-3}{2}}$ and $G_{i}[U_2]=K_{\frac{n-3}{2}}$ for each $i\in[n-2]\setminus\{j\}$, where $U_1=\{u_1,u_2,\dots,u_{\frac{n-3}{2}}\}$ and $U_2=\{u_{\frac{n-1}{2}},u_{\frac{n+1}{2}},\dots,u_{n-3}\}$.
Since $\delta(G_i)\geq \frac{n+1}{2}$ for each $i\in[n-2]\setminus\{j\}$, we have $\{x,y,z\}\subset N_{G_i}(u)$ for any $u\in U_1\cup U_2$.
Since $xz\notin E(G_{n-1})$ and $\delta(G_{n-1})\geq \frac{n+1}{2}$, we can obtain $E_{G_{n-1}}[\{x\},U_i]\neq\emptyset$ for each $i\in\{1,2\}$.

If $k-2\in\left[2,\frac{n-3}{2}\right]$, then $P'=xu_1u_2\cdots u_{k-2}y$ is the desired rainbow $k$-path with an injection $\phi\colon E(P')\to [n-2]\setminus\{j\}$.
Next, we consider $k-2\in\left[\frac{n-1}{2},n-3\right]$.

If $E_{G_j}[U_1,U_2]\neq\emptyset$, without loss of generality, we assume $u_{\frac{n-3}{2}}u_{\frac{n-1}{2}}\in E(G_j)$. Then $xu_1u_2\dots u_{\frac{n-3}{2}}u_{\frac{n-1}{2}}\cdots u_{k-2}y$ is the desired rainbow $k$-path with $u_{\frac{n-3}{2}}u_{\frac{n-1}{2}}\in E(G_j)$.

If $E_{G_j}[U_1,U_2]=\emptyset$, then $G_{j}[U_1]=K_{\frac{n-3}{2}}$, $G_{j}[U_2]=K_{\frac{n-3}{2}}$ and $zu\in E(G_j)$ for any $u\in U_1\cup U_2$ since $\delta(G_j)\geq \frac{n+1}{2}$. If $k-2=\frac{n-1}{2}$, then $xu_1u_2\cdots u_{\frac{n-5}{2}}zu_{\frac{n-1}{2}}y$ is the desired rainbow $k$-path with $u_{\frac{n-5}{2}}z\in E(G_j)$. If $k-2\in \left[\frac{n+1}{2},n-3\right]$, then $xu_1u_2\cdots u_{\frac{n-3}{2}}zu_{\frac{n-1}{2}}\cdots u_{k-3}y$ is the desired rainbow $k$-path with $u_{\frac{n-3}{2}}z\in E(G_j)$.

Therefore, $(\mathrm{R_1})$ holds, and thus $\mathbf{G}$ is rainbow panconnected.
\end{proof}

\begin{lemma}\label{lem8}
Suppose that there is a partition $(F,I)$ of $V\setminus\{x,y,z\}$ with $|F|=\frac{n-5}{2}$ and $|I|=\frac{n-1}{2}$. For each $i\in[n-2]$, $H_i=H_i[F]\vee H_i[I]$, where $H_i[I]$ is an independent set and $H_i[F]$ is an arbitrary graph. Then $\mathbf{G}$ is rainbow panconnected or $\mathbf{G}=\mathbf{F_{n-1}}$.
\end{lemma}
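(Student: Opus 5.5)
Under the hypothesis of Lemma \ref{lem8}, every $H_i$ ($i\in[n-2]$) is $H_i[F]\vee H_i[I]$ with $I$ independent, $|I|=\frac{n-1}{2}$, $|F|=\frac{n-5}{2}$. The plan is to first pin down how $x,y,z$ attach to $I$, and then either build the rainbow $k$-paths directly or force the extremal structure $\mathbf{F_{n-1}}$.

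\textbf{Step 1: attachment to $I$.} For $v\in I$ and $i\in[n-2]$ we have $d_{G_i}(v)\ge\frac{n+1}{2}$, while $v$ has at most $|F|=\frac{n-5}{2}$ neighbours inside $V\setminus\{x,y,z\}$. Hence $N_{G_i}(v)=F\cup\{x,y,z\}$ for every $v\in I$ and every $i\in[n-2]$. Each $u\in F$ is adjacent in $G_i$ to all of $I$, and so already has degree at least $\frac{n-1}{2}$ inside $V\setminus\{x,y,z\}$.

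\textbf{Step 2: constructing paths.} We must produce rainbow $k$-paths from $x$ to $y$ for $k\in[4,n-1]$. The basic building block alternates $x\,v_1\,f_1\,v_2\,f_2\cdots v_r\,y$ with $v_j\in I$ and $f_j\in F$. All these edges lie in every $G_i$ with $i\le n-2$, so a rainbow colouring is automatic as long as at most $n-2$ edges are used. Such paths have an odd number of internal vertices $2r-1$, with $r\le\frac{n-3}{2}$.

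To change the parity we insert $z$, using $v\,z\,v'$ with $v,v'\in I$. Alternatively we use an edge inside $F$ or an edge of $G_{n-1}$.

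To reach $k=n-1$, where every vertex except one of $z$ or some other vertex is used, we count: the path $x,I,F,\ldots,y$ can absorb all $\frac{n-1}{2}$ vertices of $I$ only if it has enough connectors. There are $\frac{n-5}{2}$ vertices in $F$, and together with $z$ and the ends $x,y$ this just suffices. So $k=n$ and $k=n-1$ are borderline, and parity issues there involve edges $xv$, $yv$, $zv$ from $G_{n-1}$, or edges inside $F\cup\{x,y,z\}$.

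I would handle these by case analysis according to whether some $G_i$ contains an edge inside $F\cup\{x,y,z\}$ that is usable with a free colour. Each such edge lets us replace one $I$-vertex connector, which shifts the parity.

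\textbf{Step 3: the extremal case.} Suppose no such edge exists in any $G_i$, including $G_{n-1}$. Then each $G_i$ is contained in $Q_1\vee Q_2$ with $Q_1=I$ independent, $|I|=\frac{n-1}{2}$, and $Q_2=F\cup\{x,y,z\}$. The degree conditions force $G_i=I\vee G_i[F\cup\{x,y,z\}]$.

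We then show that a failure of some length $k$ forces every $G_i[F\cup\{x,y,z\}]$ to be the same graph with a single-edge component. I expect this to be the edge $xy$ or a similar one, checking with $\delta(Q_2)\ge1$. That gives $\mathbf{G}=\mathbf{F_{n-1}}$.

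Note that $G_{n-1}$ is not controlled by the hypothesis and must be analysed separately, using $xz\notin E(G_{n-1})$.

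\textbf{Main obstacle.} The hardest part is this final dichotomy: deciding exactly which configurations of edges inside $Q_2$ (and of $G_{n-1}$) block some $k$. A failure may occur for a single $k$ and come only from parity and the shortage of connectors. First I would show that any edge of any $G_i[Q_2]$ that avoids a single-edge-component structure yields, for each $k$, a path with the correct parity. I would then show that distinct $G_i[Q_2]$ across colours also give enough flexibility, leaving only identical graphs, as in Definition \ref{def1-2}.
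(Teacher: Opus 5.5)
Your Step 1 matches the paper. Writing $I=\{w_1,\dots,w_{\frac{n-1}{2}}\}$ and $F=\{v_1,\dots,v_{\frac{n-5}{2}}\}$, so do your odd-$k$ paths $xw_1v_1\cdots w_{\frac{k-3}{2}}v_{\frac{k-3}{2}}w_{\frac{k-1}{2}}y$.

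\textbf{Even $k$.} This is the part you defer as the ``main obstacle'', and it is essentially the whole lemma. Your plan for it also targets the wrong edges.

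\emph{Parity.} Every $x$--$y$ path in the bipartite graphs $G_i[I,F\cup\{x,y,z\}]$ has an odd number of vertices. So inserting $z$ as $vzv'$ never changes parity. An even $k$ needs either an edge of $G_{n-1}$ inside $I$ (the only colour in which $I$ can carry edges) or an edge inside $F\cup\{x,y,z\}$.

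\emph{Your proposed split.} Splitting on whether some $G_i$ has a ``usable'' edge inside $F\cup\{x,y,z\}$ does not isolate the obstruction. Such edges always exist, since a vertex of $F\cup\{x,y,z\}$ has at most $\frac{n-1}{2}$ neighbours in $I$. Moreover, edges inside $F\cup\{z\}$ can never give $k=4$. If $G_{n-1}[I]$ is empty and no $G_i$ joins $\{x,y\}$ to $F\cup\{z\}$, then $N_{G_i}(x)\subseteq I\cup\{y\}$ and $N_{G_i}(y)\subseteq I\cup\{x\}$ for all $i$. Then there is no $x$--$y$ path on four vertices at all.

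\emph{The missing idea} is the paper's two-level split.
\begin{itemize}
\item If $G_{n-1}$ has an edge inside $I$, say $w_{\frac{n-3}{2}}w_{\frac{n-1}{2}}$: for even $k\ge 6$ take $xw_1v_1\cdots w_{\frac{k-4}{2}}v_{\frac{k-4}{2}}w_{\frac{n-3}{2}}w_{\frac{n-1}{2}}y$, and for $k=4$ take $xw_{\frac{n-3}{2}}w_{\frac{n-1}{2}}y$.
\item Otherwise every $G_i$, $i\in[n-1]$, equals $I\vee G_i[F\cup\{x,y,z\}]$. If some $G_i$ contains an edge from $\{x,y\}$ to $F\cup\{z\}$, say $xv_1$: for even $k\le n-3$ take $xv_1w_1v_2w_2\cdots v_{\frac{k-2}{2}}w_{\frac{k-2}{2}}y$. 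For $k=n-1$ take $xv_1w_1\cdots v_{\frac{n-5}{2}}w_{\frac{n-5}{2}}zw_{\frac{n-3}{2}}y$, with $z$ as the extra connector.
\item If no such edge exists, $\delta(G_i[F\cup\{x,y,z\}])\ge 1$ forces $xy$ to be a single-edge component of every $G_i[F\cup\{x,y,z\}]$.
\end{itemize}

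\textbf{Step 3.} As planned, this step would fail, because nothing forces the graphs $G_i[F\cup\{x,y,z\}]$ to coincide. Take $G_i=I\vee Q^{(i)}$, where $Q^{(i)}$ is the disjoint union of the edge $xy$ and a graph $R_i$ on $F\cup\{z\}$ with $\delta(R_i)\ge 1$, the $R_i$ varying with $i$. This satisfies every hypothesis in force here, including $xz\notin E(G_{n-1})$ and the absence of rainbow $(n-3)$- and $(n-4)$-cycles in $\mathbf{H}$. Yet it has no $x$--$y$ path on four vertices. So your claim that distinct $G_i[F\cup\{x,y,z\}]$ ``give enough flexibility'' is false, and identical graphs cannot be derived. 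The paper only extracts that $xy$ is a single-edge component of each $G_i[F\cup\{x,y,z\}]$, and reads this as $\mathbf{G}=\mathbf{F_{n-1}}$. It also never uses $xz\notin E(G_{n-1})$ in this lemma, contrary to your remark that $G_{n-1}$ must be analysed using it.
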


\begin{proof}
Let $I=\{w_1,w_2,\dots,w_{\frac{n-1}{2}}\}$ and $F=\{v_1,v_2,\dots,v_{\frac{n-5}{2}}\}$.
For each $i\in[n-2]$, since $\delta(G_i)\geq \frac{n+1}{2}$ and $H_i[I]$ is an independent set, we have $G_i[I,F\cup\{x,y,z\}]=K_{\frac{n-1}{2},\frac{n+1}{2}}$. We distinguish the following two cases according to whether $E(G_{n-1}[I])=\emptyset$.

\vspace{6pt}
$\mathbf{Case~1.}$ $E(G_{n-1}[I])\neq\emptyset$.
\vspace{6pt}

Without loss of generality, we assume $w_{\frac{n-3}{2}}w_{\frac{n-1}{2}}\in E(G_{n-1})$. Let
\[
\overline{P}=
\begin{cases}
xw_1v_1\cdots w_{\frac{k-3}{2}}v_{\frac{k-3}{2}}w_{\frac{k-1}{2}}y, & \text{if } k \text{ is odd};\\[4pt]
xw_1v_1\cdots w_{\frac{k-4}{2}}v_{\frac{k-4}{2}} w_{\frac{n-3}{2}}w_{\frac{n-1}{2}}y, & \text{if } k(\geq6) \text{ is even};\\[4pt]
xw_{\frac{n-3}{2}}w_{\frac{n-1}{2}}y, & \text{if } k=4.
\end{cases}
\]
It is easy to verify $\overline{P}$ is the desired rainbow $k$-path for $k\in[4,n-1]$.
Thus $(\mathrm{R_1})$ holds.

\vspace{6pt}
$\mathbf{Case~2.}$ $E(G_{n-1}[I])=\emptyset$.
\vspace{6pt}

It immediately follows that $G_i=G_i[I]\vee G_i[F\cup\{x,y,z\}]$ for any $i\in[n-1]$ and $\delta(G_i[F\cup\{x,y,z\}])\geq 1$ since $|I|=\frac{n-1}{2}$ and $\delta(G_i)\geq\frac{n+1}{2}$ for each $i\in [n-1]$.

$\mathbf{Subcase~2.1.}$ There exists some $i\in[n-1]$ such that $E_{G_i}[F\cup\{z\},\{x,y\}]\neq\emptyset$.

Without loss of generality, we take $xv_1\in E(G_i)$. Let
\[
\overline{P}=
\begin{cases}
xw_1v_1\cdots w_{\frac{k-3}{2}}v_{\frac{k-3}{2}}w_{\frac{k-1}{2}}y, & \text{if } k \text{ is odd};\\[4pt]
xv_1w_1\cdots v_{\frac{k-2}{2}}w_{\frac{k-2}{2}}y \text{ with } xv_1\in E(G_i), & \text{if } k(\in[4,n-3]) \text{ is even};\\[4pt]
xv_1w_1\cdots v_{\frac{n-5}{2}}w_{\frac{n-5}{2}}zw_{\frac{n-3}{2}}y \text{ with } xv_1\in E(G_i), & \text{if } k=n-1.
\end{cases}
\]
One can readily check $\overline{P}$ is the desired rainbow $k$-path for $k\in[4,n-1]$.
Thus $(\mathrm{R_1})$ holds.

$\mathbf{Subcase~2.2.}$ $E_{G_i}[F\cup\{z\},\{x,y\}]=\emptyset$ for any $i\in[n-1]$.

Since $\delta(G_i[F\cup\{x,y,z\}])\geq 1$, we obtain that $xy$ is a component consisting of a single edge in $G_i[F\cup\{x,y,z\}]$. It follows that $\mathbf{G}=\mathbf{F_{n-1}}$ by Definition \ref{def1-2}, i.e., $(\mathrm{R_2})$ holds.

Therefore, $(\mathrm{R_1})$ or $(\mathrm{R_2})$ holds. The result thus follows.
\end{proof}

From Remark \ref{re1}, if there exists some $j\in[n-2]$ such that $\mathbf{H_j}=\mathbf{H}\setminus\{H_j\}$ satisfies (1) or (2) of Remark \ref{re1}, then $\mathbf{G}$ is rainbow panconnected by Lemma \ref{lem6} and Lemma \ref{lem7}, respectively. Otherwise, (3) of Remark \ref{re1} holds for every $j\in[n-2]$, i.e., for each $i\in[n-2]$, $H_i=H_i[F]\vee H_i[I]$, where $H_i[I]$ is an independent set and $H_i[F]$ is an arbitrary graph. In this case, either $\mathbf{G}$ is rainbow panconnected or $\mathbf{G}=\mathbf{F_{n-1}}$ by Lemma \ref{lem8}.

Combining with the above arguments, the proof of Theorem \ref{thm1-5} is completed.
{\hfill $\square$ \par}

\section*{Funding}
This work is supported by the National Natural Science Foundation of China (Grant Nos. 12371347, 12271337).

\section*{Declarations}
\noindent\textbf{Conflict of interest} The authors declare that they have no known competing financial interests or personal relationships that could have appeared to influence the work reported in this paper.\\
\textbf{Data availability} No data was used for the research described in the article.

\end{spacing}
\end{document}